\newtheorem{corollary}{Corollary}[section]
\algrenewcommand\algorithmicrequire{\textbf{Input:}}
\algrenewcommand\algorithmicensure{\textbf{Output:}}
\tikzstyle{box} = [rectangle, rounded corners, minimum width=3cm, minimum height=1cm,text centered,text width=3cm, draw=black]
\tikzstyle{longbox} = [rectangle, rounded corners, minimum width=6cm, minimum height=1cm,text centered,text width=6cm, draw=black]
\tikzstyle{end} = [rectangle, rounded corners, minimum width=5cm, minimum height=1cm,text centered,text width=5cm, draw=black,fill=red!30]
\tikzstyle{arrow} = [thick,->,>=stealth]
\tikzstyle{line} = [draw, -latex]
\tikzstyle{startstop} = [rectangle, rounded corners, 
\tikzstyle{start} = [rectangle, rounded corners, 
\tikzstyle{io} = [trapezium, 
\tikzstyle{process} = [rectangle, 
\tikzstyle{smallprocess} = [rectangle, 
\tikzstyle{decision} = [rectangle, 
\tikzstyle{arrow} = [thick,->,>=stealth]
\newtheorem{theorem}{Theorem}[section]
\newtheorem{remark}{Remark}
\newcommand{\bit}{\begin{itemize}}
\newcommand{\eit}{\end{itemize}}
\newcommand{\ben}{\begin{enumerate}}
\newcommand{\een}{\end{enumerate}}
\newcommand{\beq}{\begin{equation}}
\newcommand{\eeq}{\end{equation}}
\newcommand{\beqa}{\begin{eqnarray*}}
\newcommand{\eeqa}{\end{eqnarray*}}
\newcommand{\bc}{\begin{center}}
\newcommand{\ec}{\end{center}}
\DeclareMathOperator*{\argmin}{argmin}
\DeclareMathOperator*{\argmax}{argmax}
\newcommand{\bx}{\textbf{x}}
\newcommand{\bX}{\textbf{X}}
\newcommand{\bxi}{\bm{\xi}}
\def\hlinewd#1{%
\noalign{\ifnum0=`}\fi\hrule \@height #1 \futurelet
\reserved@a\@xhline}
\title{Towards Robust Data-Driven Automated Recovery of Symbolic Conservation Laws from Limited Data}
\author{Tracey Oellerich$^{(1)}$, Maria Emelianenko$^{(1)}$\\ $^{(1)}$Department of Mathematical Sciences, George Mason University}
\begin{document}

\maketitle
\section*{Abstract}
Conservation laws are an inherent feature in many systems modeling real world phenomena, in particular, those modeling biological and chemical systems. If the form of the underlying dynamical system is known, linear algebra and algebraic geometry methods can be used to identify the conservation laws. Our work focuses on using data-driven methods to identify the conservation law(s) in the absence of the knowledge of system dynamics. Building in part upon the ideas proposed in \cite{Kaiser18}, we develop a robust data-driven computational framework that automates the process of identifying the number and type of the conservation law(s) while keeping the amount of required data to a minimum. We demonstrate that due to relative stability of singular vectors to noise we are able to reconstruct correct conservation laws without the need for excessive parameter tuning. While we focus primarily on biological examples,  the framework proposed herein is suitable for a variety of data science applications and can be coupled with other machine learning approaches. 

\section{Introduction}

Due to the importance of identifying conserved quantities in a variety of practical applications, for decades this area has enjoyed much attention in applied mathematical literature. Analytically, linear conservation laws can be discovered from a given dynamical system by solving a linear system of equations formed using a stoichiometric matrix \cite{Feinberg, Oellerich20, Dickenstein16}. Nonlinear relations can be found using more sophisticated methodology as for instance discussed in \cite{Bressan11}. Methods in Lie group theory have also been heavily utilized for determining conservation laws \cite{Popovych20, Anderson96, Peng17,Khamitova09,Olver93}.  

Approaching this problem computationally, we see there are multiple algorithms designed to learn conservation laws given a dynamical system input. In \cite{Holiday19}, manifold learning is applied to both learn conservation laws and identify reduced state variables. AI-Poincar\'{e} 2.0\cite{Liu22} takes differential equation inputs and uses trained neural nets and manifold learning to identify the number of conservation laws as well as their symbolic representation. Eliminating the need for a black-box methodology, the Sparse Invariant Detector (SID)\cite{Liu23} uses linear algorithms to discover conservation laws. However, in the age of data-driven modeling, we want to tackle a more challenging question of recovering these relations without knowing the system dynamics a priori.

Specialized approaches have been developed to specifically recover underlying Hamiltonian functions\cite{Greydanus19, Toth19}.  More general approaches, such as AI-Poincar\'{e} \cite{Liu21} and ConservNet \cite{Ha21}, use neural networks to recover conservation laws from observation data. In \cite{Mototake21}, a deep neural net structure is paired with Noether's theorem to extract physical information about the underlying physical system. Manifold learning with optimal transport \cite{Lu23} provides a geometric approach shown to be robust to noise. In \cite{Arora23}, machine learning is used to both identify the conservation laws via first integrals as well as the underlying dynamical system structure. Koopman theory coupled with data-driven regression provides a symbolic representation of the conserved quantities \cite{Kaiser18}. Typically, these approaches can require anywhere from 100 points \cite{Arora23} to upwards of $10^3$ points\cite{Liu22}. Higher-order systems may require additional points for accurate recovery, although it is unclear what the relationship is amongst order and necessary observations. Ability to handle noisy experimental data also remains a outstanding challenge that needs to be addressed.

The SVD-based null space estimation method that we adopt in this paper is relatively robust and less demanding in terms of data and underlying structure requirements. In fact, as shown herein, in many cases only 20 points is sufficient for accurate recovery of the underlying conserved structures even in a high-noise environment. In addition to keeping the observable data size requirements to a minimum, we also aim at learning the functional (symbolic) form of the conservation law(s) which typically is not a direct outcome for the algorithms mentioned above, with the notable exception of \cite{Kaiser18} which utilizes the same sparse identification paradigm as used in this work. Some well documented challenges associated with this approach include the need to choose appropriate functional libraries and corresponding optimality thresholds. 

We develop a unified data-driven approach to automatically select the optimal learning library and make an informed choice regarding the best polynomial degree to be used. The main idea is to perform a sequence of data transformations including SVD decomposition followed by echelon form reduction and let numerical data drive the identification of the singular value gap and inform consecutive selection of the optimization parameters. We demonstrate the power of our method on several classes of problems including automatic detection of multiple conservation laws and nonlinear conservation with applications ranging from physics to biology. 

To summarize, we propose a general methodology aimed at learning symbolic forms of conservation law(s) that allows to: (1) limit the amount of data needed for robust discovery of conservations in the presence of noise and without the knowledge of the underlying dynamics, and (2) automate the process of selecting optimal libraries and optimization thresholds based solely on the estimated noise level and the singular values of the library matrix associated with observation data.

We note that due to the relative ease of the SVD and LU-based implementation, the proposed framework lends itself well to being integrated with other learning approaches. For example, once the conservation laws have been identified from the data, they can then be used to aid in dynamical system recovery. Machine learning methods such as \cite{Zhang23, Readshaw23, Wu21, Lee21} impose conservation laws as constraints in the recovery process. On a similar note, data-driven methods such as constrained sparse Galerkin \cite{Loiseau18}, DMDc \cite{Proctor16}, SINDy-c \cite{Brunton16b}, and generalized Koopman Theory \cite{Proctor18} allow for conservation laws to be used as constraints in their optimization algorithms. 

This work is organized as follows. In Section \ref{sec:notation}, we consider systems with unknown structure which may or may not be conservative.  Using predefined candidate libraries provided by the user, we develop a general SVD-based to infer conservation laws or show none exist for the given input. As this approach is dependent on the robustness of the SVD to noise, in Section \ref{sec:noise} we derive error bounds for a class of library functions and perform numerical validation experiments. To alleviate the burden of specific library choice on the user, in Section \ref{sec:identify} we propose an algorithm allowing users to input multiple options and compare the results to make an informed decision. Using benchmark examples defined in Section \ref{sec:examples}, in Section \ref{sec:testing} we apply the algorithm in both low data noisy environment. Concluding remarks can then be found in Section \ref{sec:discussion}.

\section{Mathematical Notation and Methodology}
\label{sec:notation}
For this work, we will consider systems modeled using differential equations. Let $\textbf{x}(t) = \textbf{x} = \begin{bmatrix} x_1, x_2, x_3,..., x_n\end{bmatrix} \in \mathbb{R}^n$ be functions of time for each state variable $x_i = x_i(t)$. Consider the differential equation system shown in Equation \eqref{eq:basesystem}. Each state variable can therefore be modeled by the differential equation, $\dot{x}_i(t) = f_i(\textbf{x}(t))$. 

\begin{equation}
\begin{array}{c}
    \dot{\textbf{x}}(t) = \textbf{f}(\textbf{x}) \\
\end{array}
\label{eq:basesystem}
\end{equation}

The goal of this paper is to formulate a robust data-driven method for finding all possible conservation laws and their respective functional forms for the system\eqref{eq:basesystem} under consideration. We will seek conservation laws in the form  $g_k(\textbf{x})=C_k, k=1,\ldots K$ and $g_k:\mathbb{R}^n\to \mathbb{R}$. To simplify notation, we will use $g(\textbf{x})$ to represent a single conservation law, using subscripts as necessary. 

The data-driven nature of this approach can be articulated as follows. Following the formalism introduced in \cite{Kaiser18} we will seek an approximation for $g(\textbf{x})$ in the form specified in \eqref{eq:linapprox}. 

\begin{equation}
    g(\textbf{x)} = C \approx \Theta(\textbf{x})\bxi
    \label{eq:linapprox}
\end{equation}
where $ \Theta(\textbf{x}) = [\bm{\theta}_1(\textbf{x}), \bm{\theta}_2(\textbf{x}) \ldots,  \bm{\theta}_p(\textbf{x}) ]^T\in \mathbb{R}^{1\times p}$ and $\bm{\xi} = [\xi_1, \ldots \xi_p ]^T\in \mathbb{R}^{p}$.

In this setup, $\Theta(\textbf{x})$ is a symbolic library of candidate functions, $\{\bm{\theta}_i(\bx)\}_{i,\ldots p}$ which later on will be evaluated at $N$ data points to construct the data matrix. The choice of the library, in general left to the user, is motivated by the functional forms one expects to see in a particular application. The corresponding $\bm{\xi}\in \mathbb{R}^{p}$ is the vector of weights associated with the library terms that we are trying to find. The $\Theta$-library can be constructed in multiple ways as it is truly up to the user's discretion which terms to include. In this work, we will advocate for a robust library selection approach to alleviate the burden of placing this decision on the user. 

\begin{remark}In our considerations, the term $\mathbf{{x}}^k$ denotes the multi-index corresponding to all possible order $k$ monomials of  $\mathbf{x}$. For example, if $\mathbf{x} = [x_1, x_2]$, then $\textbf{x}^3 = \begin{bmatrix} x_1^3, & x_1^2x_2, &x_1x_2^2, &x_2^3\end{bmatrix}$.
\end{remark}

An example of a candidate $\Theta$-library for a system with two state variables, $\textbf{x} = [x_1, x_2]$ is given by 
\begin{align*}
\bm{\Theta}(\textbf{x}(t)) &=\begin{bmatrix}  \textbf{x} & \textbf{x}^2 & \sin(\textbf{x}) & \ln(\textbf{x}) \end{bmatrix}
=\begin{bmatrix}  x_1 & x_2 & x_1^2 & x_1x_2 & x_2^2 & \sin(x_1) & \sin(x_2) & \ln(x_1) & \ln(x_2) \end{bmatrix}
\end{align*}
with $p=9$. 

\begin{remark}
Natural domain considerations must be applied when considering the choice of the library functions. For instance, logarithmic functions should only be included in the library if the data is strictly positive. For example, we primarily consider biological networks in this work which should have strictly positive values.
\end{remark}
\begin{remark}
 In contrast with SINDy-based work \cite{Brunton16}, in our study we will omit the constant $\textbf{1}$ term that is normally present and include the $\ln$ terms which have been shown to appear in nonlinear conservation laws associated with chemical reaction systems \cite{Emelianenko14}. We will comment on the first note after the construction of the algorithm.
 \end{remark}

By symbolically taking the derivative of \eqref{eq:linapprox} with respect to time we get

\begin{equation}
    0=\dfrac{d}{dt}C=\dfrac{d}{dt}g(\bx) = 
    \left(\frac{d \Theta}{dt}\right)\bxi  =
    \Gamma(\bx, \dot{\bx})\bxi 
    \label{eq:formal}
\end{equation}
where  $\Gamma(\bx, \dot{\bx}) = [\dot \theta_1(\bx)\ldots \dot \theta_p(\bx)] \in \mathbb{R}^{1\times p}$ where $\theta_i$ corresponds to the i-th $\Theta$-library function. For the sample $\Theta$-library library given, the associated $\Gamma$-library is
\begin{equation}
\Gamma(\textbf{x},\dot{\textbf{x}})
=\begin{bmatrix}  
\dot{x}_1 & \dot{x}_2 & 2x_1\dot{x}_1 & \dot{x}_1x_2 +x_1\dot{x}_2& 2x_2\dot{x}_2 & \dot{x}_1\cos(x_1) & \dot{x_2}\cos(x_2) & \dfrac{\dot{x}_1}{x_1} & \dfrac{\dot{x}_2}{x_2} 
\end{bmatrix}.
\label{eq:exGamma}
\end{equation}

Let $\textbf{X} = [\textbf{x}(t_1), \textbf{x}(t_2), ..., \textbf{x}(t_N)]^T \in \mathbb{R}^{N\times n}$ be time series data for the system with time derivative data given by $\dot{\textbf{X}} = [\dot{\textbf{x}}(t_1), \dot{\textbf{x}}(t_2), \ldots, \dot{\textbf{x}}(t_N)]$. If the derivative data is not available, then $\dot{\textbf{X}}$ can be numerically approximated from $\textbf{X}(t)$ \cite{Iserles08,Nocedal06}. For systems with an abundance of noise, recent techniques in numerical differentiation can be used \cite{Chartrand11,vanBreugel20}. 
 
Adding the time series data to Equation \eqref{eq:formal}, we have the over-determined least squares problem 
\begin{equation}
\Gamma(\bX,\dot{\bX)} \bxi = 0
\label{eq:findcons}
\end{equation}
where $\Gamma \in \mathbf{R}^{N\times p}$ is the data matrix corresponding to the symbolic derivative library $\Gamma(\bx,\dot \bx)$ defined above. 

We aim at finding a nontrivial minimal norm solution to the least squares problem 
\begin{equation}
\xi= \argmin_{\bxi'}{||\Gamma(\bX,\dot \bX)\bxi'||_2}
\label{eq:minimization}
\end{equation}

which is equivalent to solving the null space problem for the matrix $\Gamma$. This problem can be solved using singular value decomposition (SVD) of $\Gamma = USV^T$, where $S\in \mathbb{R}^{N\times p}$ contains decreasing singular values $\sigma_i\geq 0$ on the diagonal,  $U\in \mathbb{R}^{N\times N}$ and $V\in \mathbb{R}^{p\times p}$ are unitary matrices corresponding to the left and right singular vectors, respectively \cite{Demmel,Golub13}. Alternatively, there are more algorithmic approaches which can be employed to find the null space (e.g. \cite{Kaiser18, Qu16, Gottlieb10}), however here we explore an SVD approach as an alternative that may pave the way towards a robust user-agnostic strategy. We aim at minimizing complexity and hyperparameters, while designing a method able to work with relatively small datasets. 

Based on the SVD methodology, identifying the conservation law then translates into finding all right singular vectors corresponding to zero or close to zero singular values based on a certain cutoff. Figure \ref{fig:flowchart1} gives a schematic of the process we intend to use for finding conservation law(s) for given data and chosen $\Theta$-library.  If the process fails to produce a $\sigma_i\approx 0$, then there are three possibilities: (1) the starting library does not contain the appropriate terms, (2) inadequate data due to noise or amount, or (3) the system does not contain a conservation law. 

Main open questions in the outlined methodology that we aim to address below are as follows. First, how does one appropriately choose a $\Theta$-library. In some cases, for instance when dealing with biochemical reaction data, the simplest form of the conservation law, that is a only first order polynomials, could be the best fit. However, as we will see later, there are instances where more complex functions are needed to obtain conservation. The second question is identifying whether or not a system naturally contains multiple conservation laws and weeding out spurious solutions. Finally, we are interested in investigating the minimal amount of data necessary for performing this analysis given an accuracy guarantee. 

\begin{figure}[H]
\begin{center}









\includegraphics[width = 1.0\textwidth]{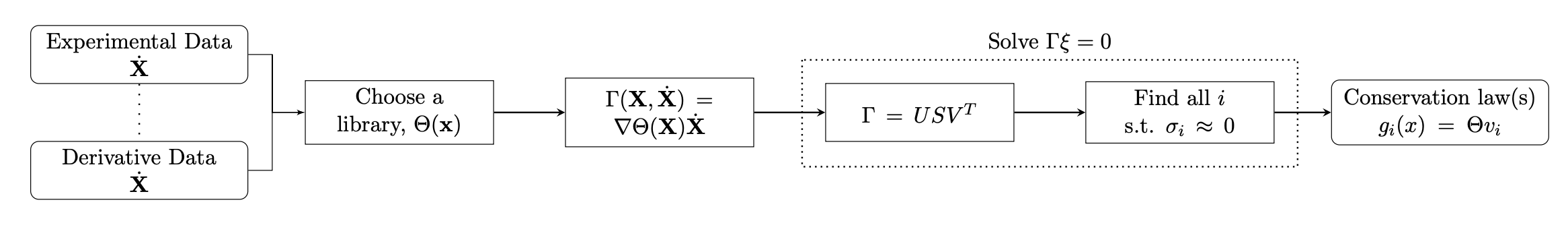}
\caption{Flowchart detailing the process of identifying conservation laws from data.} 
\label{fig:flowchart1}
\end{center}
\end{figure}

\section{Benchmark Examples}
\label{sec:examples}
In the following sections, we will be studying several networks which all contain conserved quantities. These benchmark examples can be found in Table \ref{tab:examples} and are referred to throughout the remainder of the work. Example 1, the Volpert Network, contains second order reactions, but only a single linear conservation law in all variables. Example 2 contains a mix of first and second order reactions and has two governing conservation laws, neither containing all state variables. Example 3 comes from chemistry and models the chemical oxidation process. The conservation law in this system contains a nonlinear logarithmic terms, making it harder to identify using traditional methods. Finally, we examine a degrading network in Example 4 devoid of conservation laws.  

\begingroup
\begin{table}[h]
    \centering
    \caption{Network examples used throughout this work. Data for these systems is generated using the provided coefficients.}
    
    \renewcommand{\arraystretch}{1.25}
    \scalebox{0.75}{
    \begin{tabular}{|m{2.02cm}|m{1.75cm}m{3.3cm}m{5cm}m{3cm}|}
    \hline 
    \textbf{Example} & \textbf{Variables} & \textbf{Reactions}& \textbf{Equations} & \textbf{Coefficients Used}\\
    \hline
    &&&& \\
    \textbf{1:}Volpert Network\cite{Mahdi17} & 
    $\begin{bmatrix} x_1\\x_2\\x_3\end{bmatrix}$ &
    {$\!
    \begin{aligned}
    &X_1+X_2\xrightarrow{k_1} 2X_2 \\    
    &X_2+X_3\xrightarrow{k_2} 2X_3 \\
    &X_3+X_1\xrightarrow{k_3} 2X_1
    \end{aligned}
    $} &
    {$\!
    \begin{aligned}
        &\dot{x}_1 = k_3x_1x_3-k_1x_1x_2&\\
        &\dot{x}_2 = k_1x_1x_2-k_2x_2x_3&\\
        &\dot{x}_1 = k_2x_2x_3-k_3x_1x_3&\\[.25em]
        &x_1+x_2+x_3 = C&
    \end{aligned}
    $}&    
    {$\!
    \begin{aligned}
    [&k_1,k_2,k_3] \\&= [1,1,1]\\
    \textbf{x}_0 &= [1,2,0.5]
    \end{aligned}
    $}
    \\ &&&& 
    \\ \hline
    &&&& \\
    \textbf{2:}Two \newline Conservation Laws& 
    $\begin{bmatrix} x_1\\x_2\\x_3\\ x_4\end{bmatrix}$ &
    {$\!
    \begin{aligned}
    &X_1+X_2\xrightarrow{k_1} X_3\\
    &X_3\xrightarrow{k_2} X_1+X_2 \\
    & X_3\xrightarrow{k_3} X_1+X_4
    \end{aligned}
    $}&
    {$\!
    \begin{aligned}
    \dot{x}_1 &= -k_1x_1x_2+k_2x_3+k_3x_3\\
    \dot{x}_2 &= -k_1x_1x_2+k_2x_3\\
    \dot{x}_3 &= k_1x_1x_2-k_2x_3-k_3x_3\\
    \dot{x}_4 &= k_3x_3\\[.25em]
    x_1&+x_3 = C_1\\
    x_2&+x_3+x_4 =C_2
\end{aligned}
    $} &
{$\!
    \begin{aligned}
    [&k_1,k_2,k_3] \\&= [1,1,1]\\
    \textbf{x}_0 &= [1,2,0.5,0.3]
    \end{aligned}
    $}
    \\ &&&& \\ \hline
&&&& \\
    \textbf{3:}Chemical Oxidation Network\cite{Emelianenko14} &
    $\begin{bmatrix} x\\y\\S\end{bmatrix}$& 
   {$\!
    \begin{aligned}
        &X+H_2O_2\xrightarrow{k_I} Y \\
        &Y+S\xrightarrow{k_{2}} X+P \\
        & Y\xrightarrow{k_3} \tilde{Y}\\
    \end{aligned}
    $}  &
       {$\!
    \begin{aligned}
        \dot{x} &= -k_1x+k_2yS\\
        \dot{y} &= k_1x-k_2yS-k_3y\\
        \dot{S} &= -k_2yS\\[.25em]
        x&+y-\dfrac{k_3}{k_2}\ln(S)= C
        \end{aligned}
    $} &
    {$\! 
    \begin{aligned}
    [&k_1,k_2,k_3] \\&= [2,0.4,1]\\
    \textbf{x}_0 &= [0.75,0.5,2]
    \end{aligned}
    $} 
    \\ &&&& \\ \hline
    &&&& \\
    \textbf{4:}No \newline Conservation Laws&
    $\begin{bmatrix} x_1\\x_2\end{bmatrix}$ &
    {$\!
    \begin{aligned}
&X_1\xrightarrow{k_1} 0 \\
& X_1+X_2 \xrightleftharpoons[k_3]{k_2} 2X_1
    \end{aligned}
    $}&
    {$\!
    \begin{aligned}
    &\dot{x}_1(t) = -k_1x_1+k_2x_1x_2\\
    &\hspace{4em}-k_3x_1^2\\
    &\dot{x}_2(t) =  -k_2x_1x_2+k_3x_1^2
    \label{eq:noncons}
    \end{aligned}
    $}&
{$\!
    \begin{aligned}
    [&k_1,k_2,k_3] \\&= [1,2,2]\\
    \textbf{x}_0 &= [1,2]
    \end{aligned}
    $}
    \\ 
    &&&& \\
    \hline
    \end{tabular}
    \label{tab:examples}
}
\end{table}
\endgroup

\section{Perturbation analysis}
\label{sec:noise}
In order to develop robust and stable numerical algorithm we need to understand the effect of noise on the solution produced by the SVD-based null-space estimation. There is abundance of literature devoted to the subject of studying perturbed rank-deficient systems (see e.g. \cite{Stewart06, Demmel} and references therein). Our goal in this work is to adapt existing theoretical results to the specific case at hand related to the form of the library $\Gamma$ and understand their implications for the sake of developing a robust numerical framework.    

The following classical result lays the foundation of our study. 
\begin{theorem}[Weyl \cite{Weyl, Lawson95}]
For any additive perturbation $\tilde A = A + 
\mathcal{E}$, the following bound on singular values $\sigma_i(A)$ is valid:
    \begin{equation}
\begin{array}{r@{}l}
&\varepsilon_{\sigma_i} = |\tilde{\sigma}_i - \sigma_i| \leq \lVert \mathcal{E} \rVert_2\hspace{2em} \forall i
\end{array}
\label{eq:sig_bounds}
\end{equation}
\end{theorem}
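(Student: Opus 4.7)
The plan is to prove Weyl's singular value bound via the Courant--Fischer min--max characterization of singular values, which is the standard route and keeps the argument short and self-contained. Recall that for any matrix $B \in \mathbb{R}^{N \times p}$ (say with $N \geq p$, handled analogously otherwise), the $i$-th singular value admits the variational representation
\begin{equation*}
\sigma_i(B) = \max_{\substack{V \subset \mathbb{R}^p \\ \dim V = i}} \; \min_{\substack{x \in V \\ \|x\|_2 = 1}} \|Bx\|_2,
\end{equation*}
and the dual min--max form is also available. I would start by stating this characterization and then exploit the additive structure $\tilde A = A + \mathcal{E}$ to transfer the perturbation through the norm.

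The key step is an upper bound on $\|\tilde A x\|_2$ in terms of $\|Ax\|_2$. For any unit vector $x$, the triangle inequality combined with the definition of the spectral norm gives
\begin{equation*}
\|\tilde A x\|_2 = \|(A + \mathcal{E})x\|_2 \leq \|Ax\|_2 + \|\mathcal{E} x\|_2 \leq \|Ax\|_2 + \|\mathcal{E}\|_2.
\end{equation*}
Applying this bound inside the min--max expression for $\tilde\sigma_i = \sigma_i(\tilde A)$, I would fix an optimal $i$-dimensional subspace $V^\star$ achieving $\sigma_i(A)$, and then obtain
\begin{equation*}
\tilde\sigma_i \leq \max_{\dim V = i} \min_{\|x\|_2 = 1,\, x \in V} \bigl(\|Ax\|_2 + \|\mathcal{E}\|_2\bigr) = \sigma_i(A) + \|\mathcal{E}\|_2,
\end{equation*}
so $\tilde\sigma_i - \sigma_i \leq \|\mathcal{E}\|_2$.

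The reverse inequality $\sigma_i - \tilde\sigma_i \leq \|\mathcal{E}\|_2$ follows by symmetry: write $A = \tilde A + (-\mathcal{E})$, apply the same argument with roles swapped, and use $\|-\mathcal{E}\|_2 = \|\mathcal{E}\|_2$. Combining the two directions yields the desired absolute bound $|\tilde\sigma_i - \sigma_i| \leq \|\mathcal{E}\|_2$ for every index $i$. The only subtlety, and probably the main place to be careful, is matching the indexing conventions between the max--min and min--max forms (particularly when $N < p$, where one works on the range side instead of the domain side); this is a bookkeeping issue rather than a genuine obstacle. An alternative route I would consider if one wanted to avoid the singular value min--max directly is to pass to the Hermitian dilation $H(B) = \begin{pmatrix} 0 & B \\ B^{T} & 0 \end{pmatrix}$, whose nonzero eigenvalues are $\pm \sigma_i(B)$, and then invoke the classical Weyl inequality for Hermitian eigenvalues applied to $H(\tilde A) = H(A) + H(\mathcal{E})$ together with $\|H(\mathcal{E})\|_2 = \|\mathcal{E}\|_2$; this reduces the result to the Hermitian case but is essentially equivalent in effort.
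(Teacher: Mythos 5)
Your proof is correct, and it is the standard argument: the Courant--Fischer max--min characterization of $\sigma_i$, the pointwise bound $\|\tilde A x\|_2 \le \|Ax\|_2 + \|\mathcal{E}\|_2$ on unit vectors, and the symmetric decomposition $A = \tilde A + (-\mathcal{E})$ to obtain the two-sided estimate; the Hermitian-dilation route you mention is an equally valid reduction to the eigenvalue form of Weyl's inequality. Note, however, that the paper itself offers no proof of this statement --- it is imported as a classical result with citations to the literature --- so there is no in-paper argument to compare against; your write-up simply supplies the standard proof that the paper omits.
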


Weyl's theorem posits that the singular values of a matrix are perfectly conditioned in a sense that no singular value can move more than the norm of the additive perturbation. While powerful, this result is not practically useful. However, we can go further due to the special structure afforded by the $\Gamma$ matrix construction.

Let the exact data $\textbf{x}$ be corrupted by noise $\varepsilon_x$, say $\tilde{\textbf{x}} = \textbf{x}+\varepsilon_x$ with the corresponding derivative approximation,  $\tilde{\dot{\textbf{x}}} = \dot{\textbf{x}} +\varepsilon_{\dot x}$. Numerical differentiation is a complex problem in itself. Options include standard finite difference approximation (poor choice for sparse data), iterative finite differences, total variation regularization (TVR), Tikhonov regularization, and the Savitzky-Golay filter.  In our considerations, we will primarily be utilizing Tikhonov regularization and the Savitzky-Golay filter using the implementation provided in \cite{ Breugel22,Wagner20,rdiff}. Resulting additive perturbation of the library matrix is denoted as $\tilde{\Gamma} = \Gamma + \mathcal{E}_{\Gamma}$.  

First, we perform numerical experiments to understand the sensitivity of singular values of $\Gamma$ to i.i.d. Gaussian additive noise. We use synthetic data generated for each of the benchmark ODE systems using 200 points on the interval $[0,2]$, and choosing 100 interior points to alleviate boundary effects in derivative approximation which is calculated using Tikhonov regularization.

Figure \ref{fig:sing_error_all} shows the relationship between the variance of the noise and the corresponding numerically estimated error in singular values for each of the given benchmark examples.  It is clear that all errors in the singular values fall directly below $\mathcal{E}_{\Gamma}$, as expected by Equation \eqref{eq:sig_bounds}. However, these results do not directly provide intuition on how this bound is related to the observable quantities such as the noisy data error $\varepsilon_x$ or noisy derivative approximation error $\varepsilon_{\dot x}$. We also know that these dependencies will differ depending on the choice of the libraries $\Theta$ and the functional forms included therein. The following straightforward result provides this connection for a class of polynomial library functions.

\begin{figure}[!]
    \centering
    \normalsize{\textbf{Example 1: Volpert Network}}
    \includegraphics[width = 0.86\textwidth]{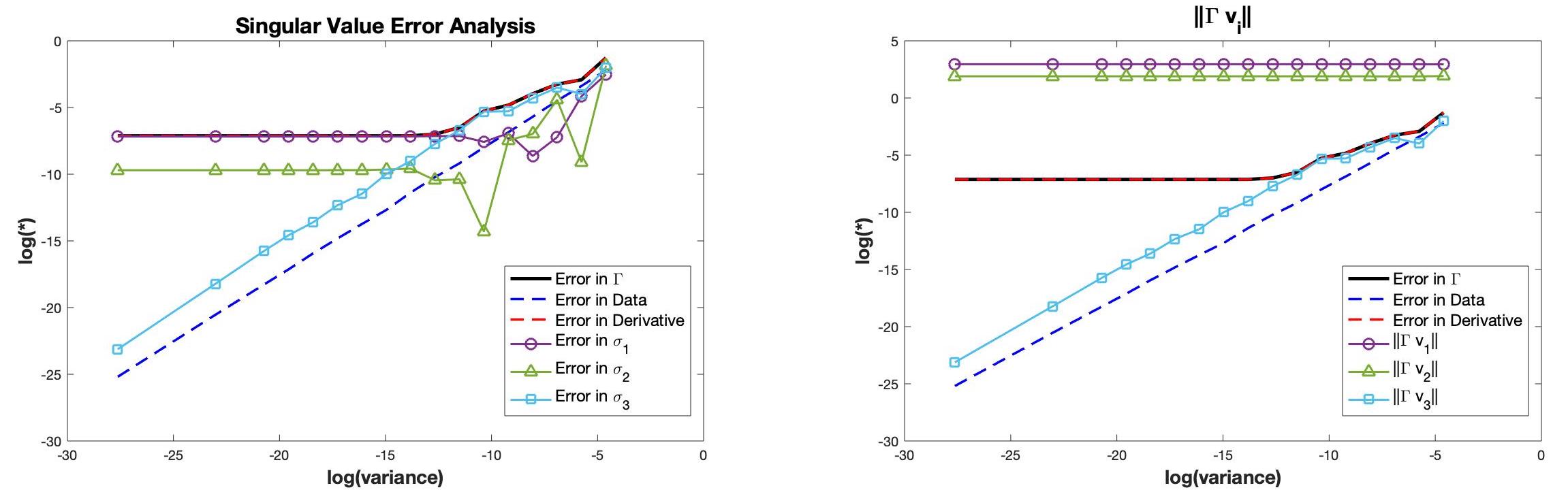}\\
    \normalsize{\textbf{Example 2: Network  with Two Conservation Laws }}
    \includegraphics[width = 0.86\textwidth]{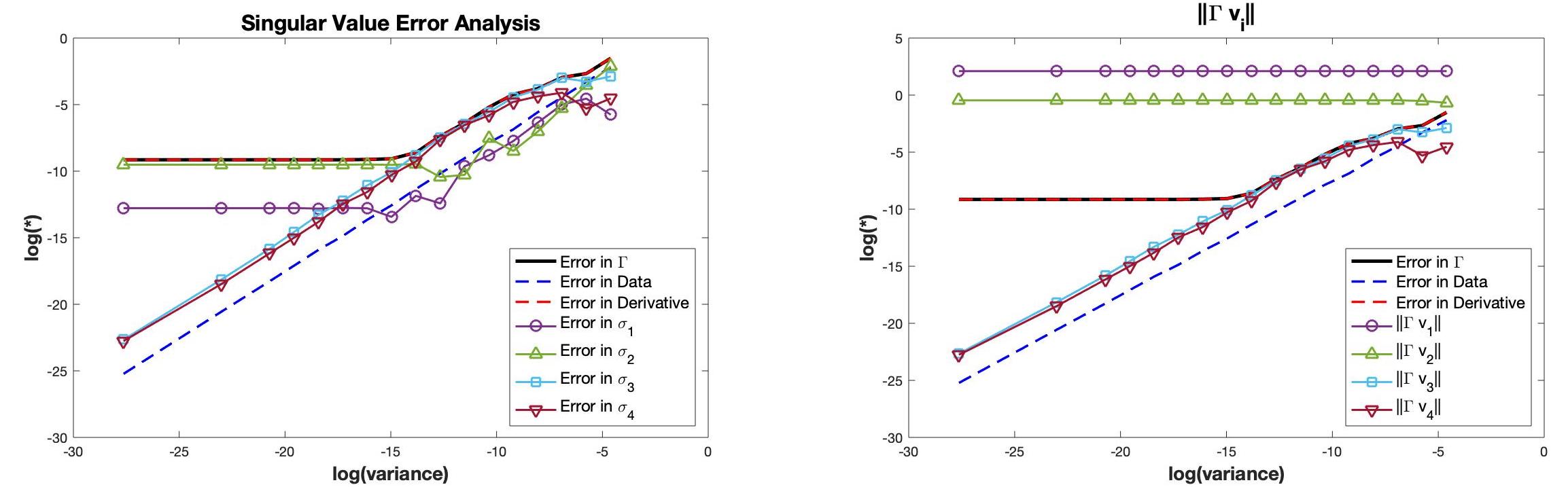}\\
   \normalsize{\textbf{Example 3: Chemical Oxidation Network}}
    \includegraphics[width = 0.86 \textwidth]{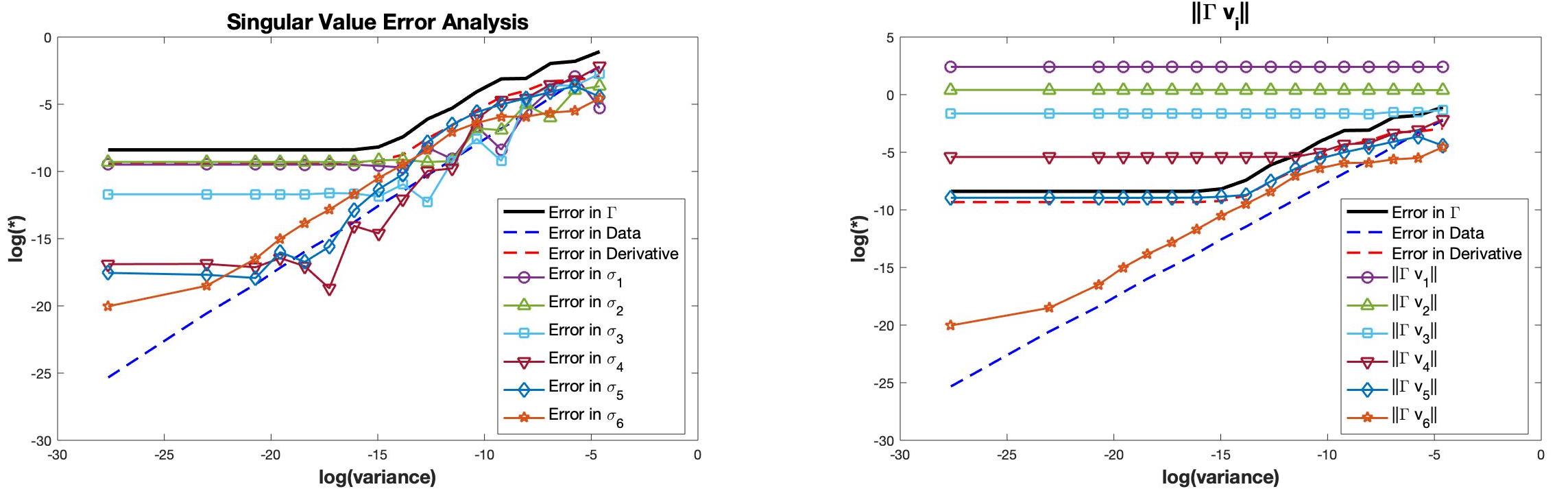}\\
    \normalsize{\textbf{Example 4: No Conservation Law}}
    \includegraphics[width = 0.86 \textwidth]{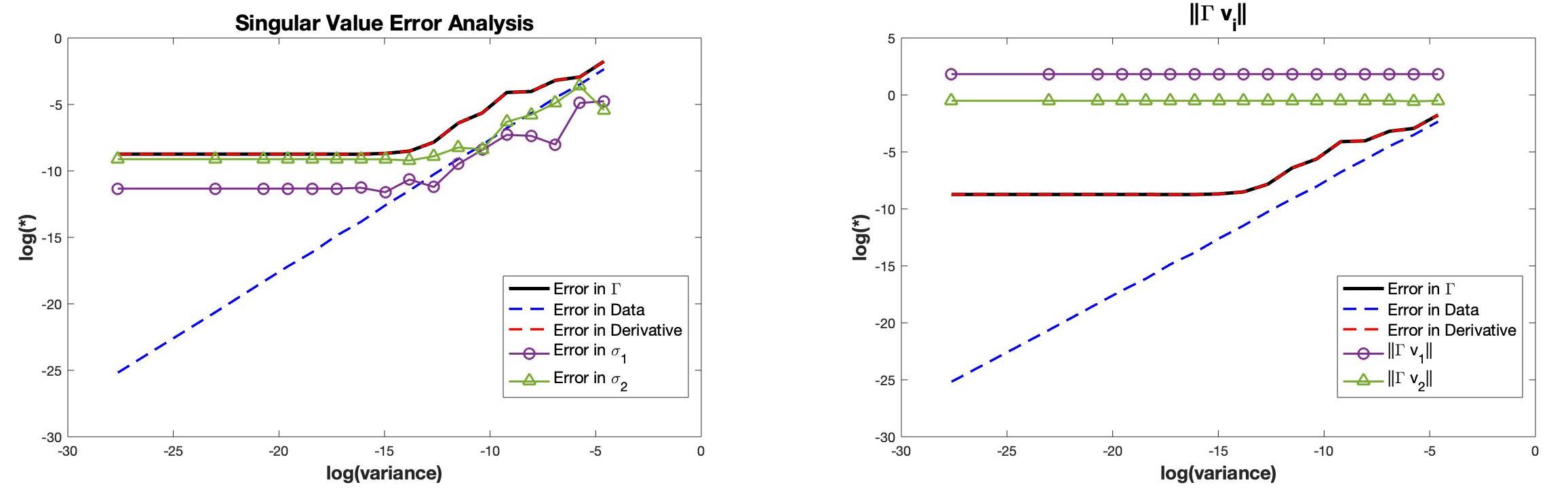}
    \caption{Log-Log plots displaying the effect of noise to the (Left) singular values, $\sigma_i$, and (Right) the corresponding $\lVert \Gamma v_i\rVert$. Each example is computed for the library corresponding to the known conservation law, or the linear library in the example having no conservation. Errors corresponding to $\varepsilon_{\textbf{x}}$,$\varepsilon_{\dot{\textbf{x}}}$, $\mathcal{E}_{\Gamma}$ will be shown for comparison. 
    }
    \label{fig:sing_error_all}
\end{figure}

\begin{corollary}[Weyl-type theorem for $\Gamma$ matrix]
Let $\textbf{X} = [\bx(t_1), \textbf{x}(t_2), ..., \textbf{x}(t_N)]^T \in \mathbb{R}^{N\times n}$ be the time series data for the system with time derivative data given by $\dot{\textbf{X}} = [\dot{\textbf{x}}(t_1), \dot{\textbf{x}}(t_2), \ldots, \dot{\textbf{x}}(t_N)]$. Consider the library matrix $\Theta(\textbf{X})\in \mathbb{R}^{N\times p}$ with $\{\theta_i(\textbf{X})\}_{i=1,\ldots,p}$ and $\theta_i\in \mathbb{P}^k$, where $\mathbb{P}^k$ denotes polynomials of degree $k$, and the corresponding $\Gamma(\textbf{X},\dot{\textbf{X}}) = \dfrac{d}{dt}\Theta(\textbf{X})$. For $\tilde{\Gamma} = \Gamma(\textbf{X}+\varepsilon_\textbf{x},\dot{\textbf{X}}+\varepsilon_{\dot{\textbf{x}}})=\Gamma + \mathcal{E}_\Gamma$, then the following estimate holds 
$$\|\tilde \sigma_i -\sigma_i \|_2 \le  \lVert \mathcal{E}_{\Gamma} \rVert_2 \le \sqrt{Np}\, \mathrm{max}(\|\varepsilon_x\|_\infty,\|\varepsilon_{\dot x}\|_\infty)$$ 
\end{corollary}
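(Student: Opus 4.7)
The plan is to chain three inequalities, the first being essentially free and the second two requiring some work on the structure of $\Gamma$. First, the leftmost bound $|\tilde\sigma_i - \sigma_i| \le \lVert \mathcal{E}_\Gamma \rVert_2$ is immediate from Weyl's theorem stated just above the corollary, applied with $A = \Gamma$ and additive perturbation $\mathcal{E} = \mathcal{E}_\Gamma$. All the substance of the corollary therefore lies in bounding $\lVert \mathcal{E}_\Gamma \rVert_2$ in terms of $\lVert \varepsilon_x\rVert_\infty$ and $\lVert \varepsilon_{\dot x}\rVert_\infty$.

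Second, I would pass from the spectral norm to the Frobenius norm via the standard inequality $\lVert \mathcal{E}_\Gamma\rVert_2 \le \lVert \mathcal{E}_\Gamma\rVert_F$, and then use the fact that $\mathcal{E}_\Gamma \in \mathbb{R}^{N\times p}$ to write
\[
\lVert \mathcal{E}_\Gamma\rVert_F = \sqrt{\sum_{j=1}^{N}\sum_{i=1}^{p} |(\mathcal{E}_\Gamma)_{ji}|^{2}} \;\le\; \sqrt{Np}\, \max_{j,i} |(\mathcal{E}_\Gamma)_{ji}|.
\]
This reduces the problem to a uniform entrywise bound on the perturbation matrix and explains exactly where the $\sqrt{Np}$ prefactor comes from.

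Third, I would analyze each entry
\[
(\mathcal{E}_\Gamma)_{ji} = \dot\theta_i\bigl(\textbf{x}(t_j)+\varepsilon_x(t_j),\,\dot{\textbf{x}}(t_j)+\varepsilon_{\dot x}(t_j)\bigr) - \dot\theta_i\bigl(\textbf{x}(t_j),\,\dot{\textbf{x}}(t_j)\bigr)
\]
for $\theta_i \in \mathbb{P}^k$. In the linear-library case $k=1$, which is the setting driving our conservation-law analysis for the Volpert and two-conservation-law benchmarks, $\dot\theta_i$ is simply a linear combination of components of $\dot{\textbf{x}}$, so each entry reduces exactly to a component of $\varepsilon_{\dot x}(t_j)$, trivially bounded by $\lVert \varepsilon_{\dot x}\rVert_\infty \le \max(\lVert \varepsilon_x\rVert_\infty, \lVert \varepsilon_{\dot x}\rVert_\infty)$. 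Combining this with the Frobenius-norm step above delivers the stated estimate.

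For general $k \ge 2$ the argument continues by expanding $\dot\theta_i$ via the product rule as a sum of terms in which one factor carries the perturbation while the remaining factors are unperturbed, and then applying the triangle inequality term by term. This final step is where the main obstacle lies: without an auxiliary assumption that the trajectory $\textbf{x}$ and its derivative $\dot{\textbf{x}}$ are uniformly bounded on the observation window $[t_1,t_N]$, the cross terms $x_{\ell}(t_j)\,\varepsilon_{\dot x_m}(t_j)$ and $\varepsilon_{x_\ell}(t_j)\,\dot x_m(t_j)$, together with the higher-order $\varepsilon$--$\varepsilon$ products, cannot be absorbed cleanly into $\max(\lVert \varepsilon_x\rVert_\infty, \lVert \varepsilon_{\dot x}\rVert_\infty)$ alone. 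I would resolve this either by making the data-boundedness assumption explicit and folding the resulting Lipschitz constant into the corollary (picking up a factor of order $k\,\max_j(\lVert\textbf{x}(t_j)\rVert_\infty,\lVert\dot{\textbf{x}}(t_j)\rVert_\infty)^{k-1}$) or by restricting the stated version of the corollary to the $k=1$ case, which is precisely what is needed for the numerical observations in Figure~\ref{fig:sing_error_all}.
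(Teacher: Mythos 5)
Your argument follows the same route as the paper's own proof: Weyl's theorem gives the first inequality, and the second comes from the norm equivalence $\lVert \mathcal{E}_\Gamma\rVert_2 \le \sqrt{Np}\,\max_{i,j}|(\mathcal{E}_\Gamma)_{ij}|$ combined with a uniform entrywise bound on the perturbation, so the $\sqrt{Np}$ factor arises identically in both. The one place you diverge is in declining to assert $|(\mathcal{E}_\Gamma)_{ij}| \le \max(\lVert\varepsilon_x\rVert_\infty, \lVert\varepsilon_{\dot x}\rVert_\infty)$ for degree $k\ge 2$, and your hesitation is well founded: the paper's proof simply notes that the entries ``consist of $\textbf{x}$, $\dot{\textbf{x}}$ and combinations thereof'' and concludes the bound, but, as you observe, a cross term such as $x_\ell(t_j)\,\varepsilon_{\dot x_m}(t_j)$ produced by the product rule is controlled by $\max(\lVert\varepsilon_x\rVert_\infty,\lVert\varepsilon_{\dot x}\rVert_\infty)$ only up to a factor depending on the size of the trajectory and its derivative. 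The stated constant is therefore literally correct only for the linear library (or under an implicit normalization $\lVert\textbf{x}\rVert_\infty,\lVert\dot{\textbf{x}}\rVert_\infty\le 1$); your proposed repair --- making the data-boundedness explicit and absorbing a Lipschitz factor of order $k\,\max(\lVert\textbf{x}\rVert_\infty,\lVert\dot{\textbf{x}}\rVert_\infty)^{k-1}$ into the constant --- is the right one, and is consistent with the paper's own remark that the bound is an overestimate not expected to be sharp.
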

\begin{proof}
Notice that since $|\mathcal{E}_{{\Gamma}_{i,j}}|$ consists of $\bx, \dot \bx$ and combinations of thereof, we get $|\mathcal{E}_{{\Gamma}_{i,j}}|\le \mathrm{max}(\|\varepsilon_x\|_\infty,\|\varepsilon_{\dot x}\|_\infty)$ and the result then follows directly from Weyl's Theorem and norm equivalence since $\lVert \mathcal{E}_{\Gamma} \rVert_2\le \sqrt{Np}\lVert \mathcal{E}_{\Gamma} \rVert_{max}$.  
\end{proof}

Note that the bound above is an overestimation of the error and is not expected to be sharp. While it serves the purpose of elucidating the approximate trend of the error, this result requires knowledge of the error in $\dot{\textbf{x}}$, which is dependent on the numerical differentiation method. 

The following corollary gives a precise bound in the case of Tikhonov regularization.

\begin{corollary}
If Tikhonov regularization is used for derivative estimation and the interval is rescaled to $[0,1]$, so that $t_i=ih$ with time spacing $h=1/N$, then by the result in \cite{Shuai06}
$$\varepsilon_{\dot x} \le 8 k_1 h^2 + 2 k^{1/3}_1 k_2^{2/3}  ||\varepsilon_{x}||^{2/3}_\infty$$
where $k_1=64(1+2||f^{(3)}||_2)$ and $k_2=\sqrt{4+2||f^{(3)}||^2_2}$ where $f(\mathrm{x})$ is the dynamics of the equation $\dot {\mathrm{x}} = f(\mathrm{x})$ being approximated. It follows that
\begin{equation}
\| \tilde \sigma_i - \sigma_i\|\le \lVert \mathcal{E}_{\Gamma} \rVert_2
\le \sqrt{Np}\, (k_1 h^2 + 2 k_2||\varepsilon_{x}||^{2/3}_\infty)
\label{eq:gamma_bound}
\end{equation}
\label{thm: sigma_bound}
\end{corollary}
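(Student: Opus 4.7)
The plan is to combine the Weyl-type bound from the previous corollary with the cited Shuai06 estimate on the Tikhonov-regularized derivative error. Concretely, the previous corollary already reduces the problem to controlling $\max(\|\varepsilon_x\|_\infty, \|\varepsilon_{\dot x}\|_\infty)$, so the only remaining work is to bound $\|\varepsilon_{\dot x}\|_\infty$ in terms of the observable quantities $\|\varepsilon_x\|_\infty$ and the grid spacing $h$. The stated Shuai06 inequality provides exactly this, so the argument is essentially a substitution followed by constant book-keeping.

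First I would invoke the Weyl-type bound for $\Gamma$ (the previous corollary) to write $\|\tilde\sigma_i - \sigma_i\| \le \|\mathcal{E}_\Gamma\|_2 \le \sqrt{Np}\,\max(\|\varepsilon_x\|_\infty,\|\varepsilon_{\dot x}\|_\infty)$. Second, I would quote the Shuai06 estimate to replace $\|\varepsilon_{\dot x}\|_\infty$ by $8 k_1 h^2 + 2 k_1^{1/3} k_2^{2/3}\|\varepsilon_x\|^{2/3}_\infty$. Third, I would observe that in the regime of practical interest ($\|\varepsilon_x\|_\infty \le 1$ and small $h$) the term $\|\varepsilon_x\|_\infty$ is dominated by $\|\varepsilon_x\|^{2/3}_\infty$, so the max collapses onto the derivative-error bound; absorbing the numerical factors into the definitions of $k_1$ and $k_2$ then yields the displayed inequality \eqref{eq:gamma_bound}.

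The main (and really only) obstacle is tracking constants cleanly: the Shuai06 bound comes with factors $8$ and $k_1^{1/3} k_2^{2/3}$ while the stated conclusion uses the cleaner $k_1$ and $k_2$, so some absorption/relabeling of constants must be justified and documented. A secondary subtlety is confirming that the application of Corollary 1 is legitimate here, i.e.\ that the polynomial-degree hypothesis on the library still applies — this is immediate for the libraries considered in the subsequent sections but should be stated. Beyond that, the proof is a direct chaining of two inequalities and does not require any new analysis.
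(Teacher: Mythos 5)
Your proposal matches the paper's (implicit) argument exactly: the corollary is stated with no more justification than ``It follows that,'' and the intended reasoning is precisely the chaining of the Weyl-type bound for $\Gamma$ with the Shuai06 derivative-error estimate, together with the observation that $\|\varepsilon_x\|_\infty \le \|\varepsilon_x\|_\infty^{2/3}$ in the small-noise regime so the maximum is controlled by the derivative term. You are also right to flag the constants as the one real loose end --- since $k_1 \ge 64 > k_2$, the displayed bound's replacement of $8k_1h^2$ by $k_1h^2$ and of $2k_1^{1/3}k_2^{2/3}$ by $2k_2$ actually \emph{shrinks} the right-hand side, so \eqref{eq:gamma_bound} as literally printed does not follow from the chain without redefining the constants; your more careful bookkeeping is the correct fix.
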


Through these results, we see an interplay between the influence of discretization error and noise. For very small values of added noise, the error bound for $\Gamma$ is dominated by $\sqrt{Np}C_1h^2$, while as we increase the error in $\textbf{x}$ to values larger than $h^2$, we begin to see more of the effect of noise on $\Gamma$. This trend is confirmed by our numerical experiments in Figure \ref{fig:sing_error_all}, where the solid black line corresponds to the error in $\Gamma$. We observe that the error remains constant until $\|\varepsilon_x\|^{2/3} \approx h^2$, at which point the error begins to increase. This will serve as a guiding principle for picking the number of measurements needed for desired accuracy.

For recovering the conservation law, we are primarily interested in the right singular vectors corresponding to smallest singular values. It's important to ensure that we can rely on reasonable perturbation bounds in this process. Numerically, in Figure \ref{fig:sing_error_all} we see stable behavior of the residual $\|\Gamma v_i\|$ w.r.t. to noise. For the  $v_i$ corresponding to the larger singular values, there is very little change in $\|\Gamma v_i\|$, while for $\sigma_j<\sigma_{\textrm{cutoff}}$, $\|\Gamma v_i\|$ follows the error in $\sigma_j$, but is not necessarily bounded by $\mathcal{E}_{\Gamma}$.
Welin's theorem is often used in this context to establish an upper bound on the perturbation of singular subspaces \cite{Wedin}. Its drawback is that it combines left and right singular vectors under one umbrella overestimates the error in each of them in cases of ``tall'' or ``long skinny'' matrices. In our case, since the calculation concerns right singular vectors, the following analytical result proves more useful and provides insight into the numerical behavior.

\begin{theorem}[\cite{Cai}]
Let $\Gamma = U \Sigma V^T\in\mathbb{R}^{N\times p}$ is of rank $r$ and $\tilde \Gamma = \tilde U \tilde \Sigma \tilde V^T=\Gamma + \mathcal{E}_\Gamma$ is its perturbation with i.i.d. zero-mean Gaussian noise $\mathcal{E}$ having unit variance. More precisely, assume that for some $\tau>0$, $\mathbb{E}\exp(t\mathcal{E}_\Gamma)\le \exp(\tau t)$ for all $t\in\mathbb{R}$. 
Let singular values be $\sigma_1\ge \ldots \sigma_r\ge 0$ and 
define principle angles as 
$$
 \Theta (V,\tilde V) = diag(\cos^{-1}(\sigma_1), \ldots, \cos^{-1}(\sigma_r) )
$$
then 
for the distances $\|\sin(\Theta(V,\tilde V))\|$ and  $\|\sin(\Theta(U,\tilde U))\|$ between 
right and left singular subspaces   there exists $C>0$ that only depends on $\tau$ such that 
$$
\mathbb{E}\|\sin(\Theta(V,\tilde V)\|^2 \le \min\left(\frac{Cp(\sigma^2_r+N)}{\sigma^4_r},1\right)
$$
$$
\mathbb{E}\|\sin(\Theta(U,\tilde U)\|^2 \le \min\left(\frac{CN(\sigma^2_r+p)}{\sigma^4_r},1\right)
$$
\label{thm: vec_bound}
\end{theorem}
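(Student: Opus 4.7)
The plan is to combine a deterministic sin-theta perturbation bound with sub-Gaussian concentration for $\mathcal{E}_\Gamma$, handling the right and left subspaces separately through the two Gram matrices. The asymmetry between the $p$-factor in the right-subspace bound and the $N$-factor in the left-subspace bound reflects that $V$ is determined by $\Gamma^T\Gamma\in\mathbb{R}^{p\times p}$ while $U$ is determined by $\Gamma\Gamma^T\in\mathbb{R}^{N\times N}$, and each piece of the analysis should be carried out at the appropriate size.

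First, at the deterministic level, I would write
$$\tilde{\Gamma}^T\tilde{\Gamma} - \Gamma^T\Gamma \;=\; \Gamma^T \mathcal{E}_\Gamma + \mathcal{E}_\Gamma^T \Gamma + \mathcal{E}_\Gamma^T \mathcal{E}_\Gamma \;=:\; W_R ,$$
and invoke the Davis-Kahan sin-theta theorem with the eigengap at $\sigma_r^2$ (the nonzero eigenvalues of $\Gamma^T\Gamma$ are $\sigma_1^2 \ge \cdots \ge \sigma_r^2$, separated from the zero cluster) to get $\|\sin\Theta(V,\tilde V)\|^2 \le \|W_R\|_2^2/\sigma_r^4$. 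The trivial clipping $|\sin\Theta|\le 1$ supplies the $\min(\cdot,1)$ in the statement. An analogous step with $W_L := \Gamma\,\mathcal{E}_\Gamma^T + \mathcal{E}_\Gamma\Gamma^T + \mathcal{E}_\Gamma \mathcal{E}_\Gamma^T \in \mathbb{R}^{N\times N}$ handles the left subspace. Second, I would pass to expectations and bound $\mathbb{E}\|W_R\|_2^2$ term-by-term. The quadratic term is controlled by the classical sub-Gaussian estimate $\mathbb{E}\|\mathcal{E}_\Gamma\|_2^4 \lesssim \tau^4(N+p)^2$, which after division by $\sigma_r^4$ contributes the $Np/\sigma_r^4$ piece once combined with the $p$-rank structure of the cross term. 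The cross term $\Gamma^T \mathcal{E}_\Gamma$ is a $p\times p$ matrix of rank at most $r$ whose columns are linear images of sub-Gaussian vectors, and a chaining or matrix-Bernstein argument yields $\mathbb{E}\|\Gamma^T \mathcal{E}_\Gamma\|_2^2 \lesssim \tau^2 p\,\sigma_r^2$ (up to absorbing the spectral spread of $\Gamma$ into the constant). Summing the three contributions, dividing by $\sigma_r^4$, and rewriting $Cp\sigma_r^{-2} + CNp\sigma_r^{-4}$ as $Cp(\sigma_r^2+N)/\sigma_r^4$ gives the stated bound. The left-subspace bound follows verbatim after exchanging the roles of $N$ and $p$ in the random-matrix estimates applied to $W_L$.

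The principal obstacle is the asymmetric operator-norm control of the cross term: extracting the factor $p$ rather than the crude $N+p$ that would come from $\|\Gamma^T\|_2\|\mathcal{E}_\Gamma\|_2$ requires exploiting that left-multiplication by $\Gamma^T$ projects $\mathcal{E}_\Gamma$ onto the at-most-$r$-dimensional row space of $\Gamma$. This is precisely where the Cai-type refinement departs from a naive Wedin application, and it is the step where the sub-Gaussian moment hypothesis is used in an essential way via a covering or dual-Sudakov-type chaining over the unit sphere in $\mathbb{R}^p$. Once this estimate is secured, the remaining assembly --- triangle inequality, symmetry between $W_R$ and $W_L$, and the trivial $\min(\cdot,1)$ clipping --- is routine.
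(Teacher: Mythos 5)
This theorem is not proved in the paper at all: it is imported verbatim from the cited reference \cite{Cai}, so the only meaningful comparison is against the argument in that reference. Your reconstruction does not follow that argument, and it contains a gap that cannot be repaired by routine means. The difficulty is exactly the step you flag as ``the principal obstacle.'' By squaring the matrix and applying Davis--Kahan to $\Gamma^T\Gamma$ with eigengap $\sigma_r^2$, you are forced to control the cross term $\mathcal{E}_\Gamma^T\Gamma$ (or its relevant off-diagonal block $V_\perp^T\mathcal{E}_\Gamma^T U\Sigma$). This block is an $(p-r)\times r$ Gaussian matrix \emph{weighted by the full diagonal} $\Sigma$, so its operator norm scales like $\sigma_1\sqrt{p}$, not $\sigma_r\sqrt{p}$; projecting onto the row space of $\Gamma$ reduces the dimension to $r$ but does not remove the $\sigma_1$ weighting. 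Your proposed estimate $\mathbb{E}\|\Gamma^T\mathcal{E}_\Gamma\|_2^2\lesssim \tau^2 p\,\sigma_r^2$ is simply false when $\sigma_1\gg\sigma_r$ (take $\Gamma=\sigma_1u_1v_1^T+\sigma_ru_rv_r^T$: then $\|\Gamma^T\mathcal{E}_\Gamma\|\ge\sigma_1\|u_1^T\mathcal{E}_\Gamma\|\sim\sigma_1\sqrt{p}$), and ``absorbing the spectral spread into the constant'' is not permitted because the theorem requires $C$ to depend only on $\tau$. The quadratic term has the same defect: the relevant block of $\mathcal{E}_\Gamma^T\mathcal{E}_\Gamma$ has squared norm of order $N^2$, not $Np$, so even that piece does not assemble into $p(\sigma_r^2+N)/\sigma_r^4$. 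In short, the ``square then Davis--Kahan'' route reproduces the Wedin-type bound that this theorem is specifically designed to improve upon.

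The actual proof in \cite{Cai} never forms the Gram matrix. It writes $\tilde\Gamma$ in the coordinates adapted to $(U,U_\perp)$ and $(V,V_\perp)$, obtaining blocks $Z_{11},Z_{12},Z_{21},Z_{22}$, proves a new \emph{deterministic} unilateral $\sin\Theta$ inequality in which the right-subspace error is governed by $\|Z_{12}\|=\|U^T\mathcal{E}_\Gamma V_\perp\|$ (an $r\times(p-r)$ block, hence of norm $\sim\sqrt{p}$) together with a second-order correction involving $\|Z_{12}\|\|Z_{21}\|/\sigma_r^2\sim\sqrt{Np}/\sigma_r^2$, and only then applies sub-Gaussian concentration blockwise. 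That is precisely how the $p$ versus $N$ asymmetry and the $p(\sigma_r^2+N)/\sigma_r^4$ form arise without any contamination by $\sigma_1$. If you want to salvage your write-up, you would need to replace the Gram-matrix reduction by such a rectangular block decomposition (or cite the unilateral perturbation lemma of \cite{Cai} directly as your deterministic input).
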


The authors of \cite{Cai} also provide lower bounds for the subspace distances which demonstrate sharpness of these perturbation results, with a noteworthy difference between the bounds for left and right singular subspaces. These bounds are non-trivial if there exists a constant $C_{gap}>0$ such that
\begin{equation}
\sigma^2_r\ge C_{gap} (\sqrt{Np}+N)
\label{gap_cond}
\end{equation}
This conforms with the classical result about singular vector perturbation behaving as $\|\mathcal{E}_\Gamma\|_2\sigma^{-1}_r$ \cite{Demmel}. However, the explicit dependence on the matrix dimensions makes it more practically useful in our context. In addition, spectral properties of $\Gamma$ play an important role since for small $\sigma_{r}$ reliable recovery will become impossible. 
 
In developing our computational framework, we will use the following guiding principles based on this theoretical analysis:
\begin{itemize}
\item The number of observations $N$ needs to significantly exceed the number of library functions $p$ in $\Gamma$. If this is the case, Theorem \ref{thm: vec_bound} assures that right singular vectors can be consistently recovered through solving the SVD problem.
\vspace{-.1in}
\item If some information about the noise and accuracy tolerance is available, $N$ and $p$ may be chosen to satisfy $\| \sin(\Theta(V,\tilde V)) \|<tol$ using the precise accuracy bound given in Theorem \ref{thm: vec_bound}, with a caveat being the difficulty in estimating the $C$ value.
\vspace{-.1in}
\item For a given library and associated spectral properties of $\Gamma$, one may calculate $\sigma_p$ value and ensure $N$ and $p$ satisfy the necessary condition given in \eqref{gap_cond}. 
\vspace{-.1in}
\item Condition \eqref{eq:gamma_bound} can be used to identify singular values associated with null space of $\Gamma$. 
\vspace{-.1in}
\item One needs to be aware that there exists no stable algorithm for recovering left singular vectors in the context of additive perturbation of this kind. 
\end{itemize}

In summary, while SVD-based methodology imposes certain conditions to ensure numerical accuracy and stability guarantees, in most cases they can be achieved by adding a modest number of data points and reducing the list of candidate functions. We are now ready to formulate the computational framework allowing to automate this process in search for hidden conservation laws in a given dataset.
 

 \section{Computational framework for learning conservation laws}  
 \label{sec:identify}
Now we will develop an algorithm for identifying the optimal $\Theta$-library from a set of possible libraries. The goal is to find a library that minimizes \eqref{eq:minimization} while not including extraneous terms in the conservation law. The found conservation law will be compared to the known solution and the conserved quantity in Equation \eqref{eq:findcons} will be recorded.

Let us consider data given by $\textbf{X}\in \mathbb{R}^{N\times n}$ and a set of distinct $\Theta$-libraries, $\Phi = \{\Theta^{(1)},\ldots, \Theta^{(k)}\}$ where $\Theta^{(i)}\in \mathbb{R}^{p_i}$. We wish to identify which $\Theta^{(i)}$ is optimal, if such a $\Theta$ exists. For each $\Theta^{(i)}$, we will run the analysis shown in Figure \ref{fig:flowchart1}, paying special attention to all the singular values in each case. Each $\Theta^{(i)}$-library will have a corresponding $\Gamma^{(i)}$-library and it's SVD, $U^{(i)}S^{(i)}(V^{(i)})^T$.  

Each $S^{(i)}$ contains the singular values $\sigma^{(i)}_{1}\leq \ldots \leq \sigma^{(i)}_{p_i}$ on the diagonal.  Let $j$ denote the index of the first singular value below a predefined cutoff such that $\sigma_j^{(i)}, \ldots, \sigma_{p_i}^{(i)} < \sigma_{\textrm{cutoff}}$. Let 
\begin{equation}
\textrm{count}^{(i)} = \textrm{length}\begin{bmatrix} \sigma_j^{(i)} & \ldots & \sigma_{p_i}^{(i)} \end{bmatrix} =
\begin{cases}
0 & \textrm{if no } \sigma_j<\sigma_{\textrm{cutoff}}\\
p_i - j+1 &  \textrm{otherwise}
\end{cases}.
\label{eq:count}
\end{equation}
If $\textrm{count}^{(i)} = 0$, then the choice of library does not produce a conservation law and no further analysis can be done for this choice of $\Theta$-library. Furthermore, if $\textrm{count}^{(i)} >n$, we will also discard these $\Theta^{(i)}$ as they have produced more conservation laws than states in the system. This could indicate that the cutoff value is placed too high or there is a library containing a subset of the functions which is more optimal. 

\begin{remark}
    The cutoff value will vary model to model. If prior knowledge about the noise in the system is known, the cutoff may be estimated used the results in Section \ref{sec:noise}.
    Based on Corollary \ref{thm: sigma_bound}, we suggest the following rule for choosing the cutoff value: $\sigma_{\textrm{cutoff}} =\sqrt{Np}\|\varepsilon_x\|_{\textrm{max}}^{2/3}$, changing according to the  library considered. This bound follows \eqref{eq:gamma_bound} while still obeying Equation \eqref{eq:sig_bounds}.
\end{remark}

Of the remaining libraries, we will look closer at their corresponding singular values. In practice, larger singular values, $\begin{bmatrix} \sigma_1^{(i)} & \ldots & \sigma_{j-1}^{(i)} \end{bmatrix}$ are often retained in order to produce matrix approximations for large matrices as those are the largest contributors of information in the transformed system \cite{BruntonKutz}. In a similar sense, we are obtaining our null space approximations from the smallest singular values, $\begin{bmatrix} \sigma_j^{(i)} & \ldots & \sigma_{p_i}^{(i)} \end{bmatrix}$, as they contribute the ``least" amount of information. We are interested in the gap between these two sets, defined by  
\begin{equation}
    \delta^{(i)} = \sigma_{j-1}^{(i)} - \sigma_{j}^{(i)}
    \label{eq:sigmagap}
\end{equation}
Each $\delta^{(i)}$ measures the discrepancy between the singular values which approximate the matrix and those which contribute to the null space. Optimal libraries will have a large $\delta^{(i)}$, indicating that there is a clear distinction between the two sets. Small $\delta^{(i)}$ indicate that $\sigma_{j-1}^{(i)}$ could have potentially been included in the other set had the threshold value allowed it. As we will see in the examples, most of these significantly small $\delta^{(i)}$'s correspond to libraries which have a large $p_i$. 

\begin{remark}
    While not a sufficient criteria for dismissal, one can also check $\Gamma^{(i)} v_j^{(i)} \leq \sigma_{\textrm{cutoff}}$ for all $j$. This will reaffirm that the choice of library does indeed produce a conservation law for the system. This is not always the best indicator of an optimal library as small coefficients can be included in libraries as the algorithm attempts to ``tune" the result to the system. 
\end{remark}

Once the optimal library has been chosen,  all that remains is to record the corresponding right singular vector(s), $v_j$, for the $\sigma_j$'s. The $v_j$ contain the coefficients for the terms in the chosen $\Theta$-library. Flowchart \ref{fig:choosetheta} details this process and Algorithm \ref{algor:opttheta} provides associated pseudo-code.

\begin{figure}[h]
\begin{center}
\includegraphics[width = 1.0\textwidth]{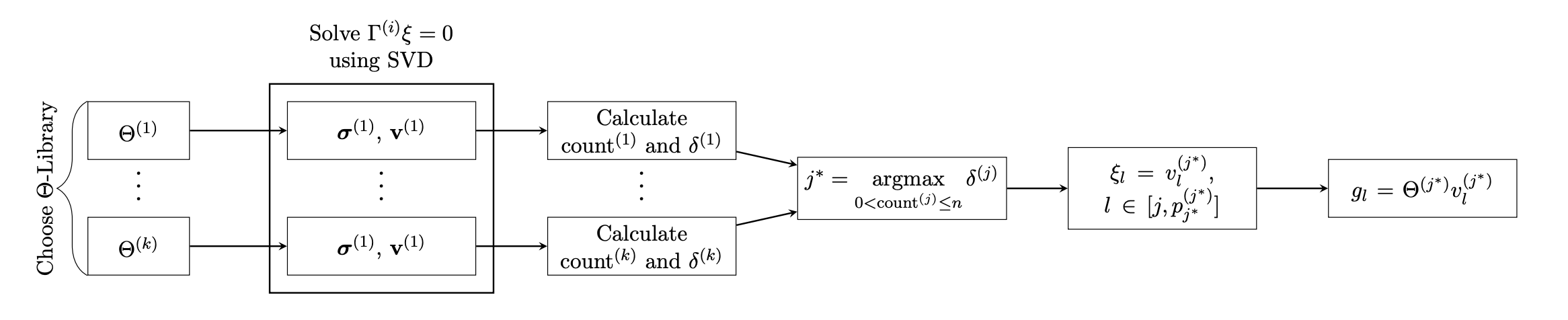}
\caption{Flowchart detailing the process for selecting the optimal $\Theta$-library. For each potential library, singular values and vectors will be recorded and count and $\delta$  are calculated. The optimal $\Theta$ will correspond to the system with $\textrm{count} \in (0,n]$ and the largest $\delta$. Once those have been selected, the corresponding singular vector(s) can be defined as the coefficients for the functions in the chosen $\Theta$-library.} 
\label{fig:choosetheta}
\end{center}
\end{figure}

\begin{remark}
Recovered conservation laws can be simplified by finding the reduced row echelon form (RREF), 
possibly using a permutation matrix to ensure the best pivot points are chosen \cite{Demmel}. Choosing to write the recovered laws in this form may provide insight into the underlying network structure as well as eliminating any scaling.
\end{remark}

 \begin{algorithm}[h]
  \caption{Algorithm for finding optimal $\Theta$-library from a list of candidate libraries. 
  } 
  \label{algor:opttheta}
  \begin{algorithmic}
 \Require {$\textbf{X}\in \mathbb{R}^{N\times n}$  - Data, $d\textbf{X} \in \mathbb{R}^{N\times n}$ - derivative data, $\{\Theta^{(i)}\}_{i=1,\ldots,k}$ - chosen libraries to test, cutoff value - $\sigma_{\textrm{cutoff}}$}
 \Ensure $\Theta^{(\textrm{opt})}$, $\bm{\Xi}$
 \State $\Phi = \{ \Theta^{(1)}(\textbf{x}), \ldots, \Theta^{(k)}(\textbf{x}) \} $  \Comment{Library of considered libraries}\;
 \For{$i\in \{1,\ldots, k\}$}
  \State $\Theta^{(i)} = \Phi_i$  \Comment{Choose a new $\Theta$ library}\
  
\State $\Gamma^{(i)} = (D_x\Theta^{(i)})^T\dot{\textbf{x}}(t) = \begin{bmatrix} \nabla_\textbf{x}\theta_1^{(i)}(\textbf{X})\cdot d\textbf{X} &\ldots &\nabla_\textbf{x}\theta_{p_i}^{(i)}(\textbf{X})\cdot d\textbf{X}  \end{bmatrix} $ \Comment{Calculate $\Gamma$}
  \State $\Gamma^{(i)} = U^{(i)}S^{(i)}(V^{(i)})^T$\; 
  \State $S^{(i)} = \textrm{diag}(\sigma^{(i)}_j) \hspace{1em} \sigma_1 \geq \ldots \geq \sigma_{p_i}$\;  
\State $\textrm{ind}^{(i)} = \{j| \sigma_j^{(i)}<\sigma_{\textrm{cutoff}}\}$ \Comment{Find indices where  $\sigma \approx 0$}\;
 \State  $count^{(i)} = \textrm{length}(\textrm{ind}^{(i)})$ \Comment{Count the number of nonzero entries}
  \If{$0 < count^{(i)} < n$}
  \State $\delta^{(i)} = \{ \sigma^{(i)}_{j-1}-\sigma^{(i)}_{j} | j =\min(\textrm{ind}^{(i)}) \}$  \Comment{Sigma-gap}
  \Else
  \State go to next section\Comment{Too many recovered conservation laws or none.} 
  \EndIf
  \EndFor
  \State opt $\gets \argmax\limits_{i}\delta^{(i)}$\;
  \State \textbf{Return:} $\Theta^{(\textrm{opt})}$ \& $\bm{\Xi} = \{V^{(opt)}_j| j\in \textrm{ind}^{(\textrm{opt}))}\}$\;
 \State  \textrm{\textbf{Optional}: } $P\bm{\Xi} = LU \hspace{1em} \rightarrow \hspace{1em} \bm{\Xi }= \textrm{RREF}((P\bm{\Xi})^T)P^{T}$
  \end{algorithmic}
\end{algorithm}

 \section{Numerical Results}
\label{sec:testing}
Now we will implement Algorithm \ref{algor:opttheta} for the examples in Table \ref{tab:examples} for $N=20, 100$ with noise variance $=0,1e-10,1e-5$. The candidate libraries will contain a combination of: monomials up to order 3, trigonometric terms, and logarithmic terms, with the largest library considered being $\begin{bmatrix} \textbf{x} & \textbf{x}^2 & \textbf{x}^3 & \sin(\textbf{x}) & \cos(\textbf{x}) & \ln(\textbf{x}) \end{bmatrix}$.
In the following figures, libraries will be represented by a triple, $(a,b,c)$, corresponding to: polynomial terms up to order $a$, include ($b=1$)/exclude ($b=0$) trigonometric terms, and include ($c=1$)/exclude ($c=0$) logarithmic terms. We will only consider libraries in which $N\geq p$, ensuring accurate null space recovery.  A complete list of libraries considered and their corresponding triple can be found in the Supplemental Notes. 

Data is simulated from a known ODE system for $t=[0:dt:1]$, where $dt = \frac{1}{N}$ and i.i.d. Gaussian noise with a given variance is added. The corresponding derivative data will be generated using numerical differentiation methods refined in \cite{Wagner20}. Tikhonov regularization is used for Examples 1, 2, and 4 whereas the Savitzky-Golay filter is used for Example 2.

\begin{remark}
   In our observations, the ``ideal" number of points needed is dependent on the libraries considered. According to Theorem \ref{thm: vec_bound}, to ensure possibility of proper recovery of singular subspaces, $\Gamma$ should necessarily be rectangular with $N\geq p$.
   For methods to identify a sparse null space in the under-determined case, see \cite{Coleman86,Coleman87}
\end{remark}

For each, we graph all singular values, $\sigma_i$, for each candidate library and the corresponding cutoff (black line). Both the number of $\sigma_i < \sigma_{\textrm{cutoff}}$ as well as the distance between points directly above and below $\sigma_{\textrm{cutoff}}$ will be used to determine the optimal library. In each figure, we shade the (perturbed) singular value(s) corresponding to the true conservation law. Ideally we wish to see the algorithm selecting the libraries with shaded values and only shaded values appearing below the cutoff. Once the optimal library is chosen, we can consider the errors related to the recovered conservation law(s) follow and overall method accuracy for 1000 runs. For more detailed analysis of the overall process, refer to the tables in the Supplemental Notes. 

All figures are generated using Matlab with formatting 
developed in \cite{subtightplot}. 

\subsection{Example 1}
\label{sec:examples-volpert}

Figure \ref{fig:volpert_sing} contains the analysis for Example 1 in Section \ref{sec:examples}.  In all cases,  first order polynomial libraries produced the optimal conservation law. For example, in the case of  20 points with a variance of 0, four libraries, $(1,0,0),(1,0,1),(2,0,0),(1,1,1)$, have exactly one singular value below the cutoff while the remaining libraries contain more than 3. 
Of the four libraries meeting the counting criteria, by looking at $\delta$ as defined in Equation \eqref{eq:sigmagap}, it is clear that the first library corresponding to linear terms has the largest gap. This is shown numerically in the Supplemental Notes. 

\begin{figure}[H]
    \includegraphics[width = 1.0\textwidth]{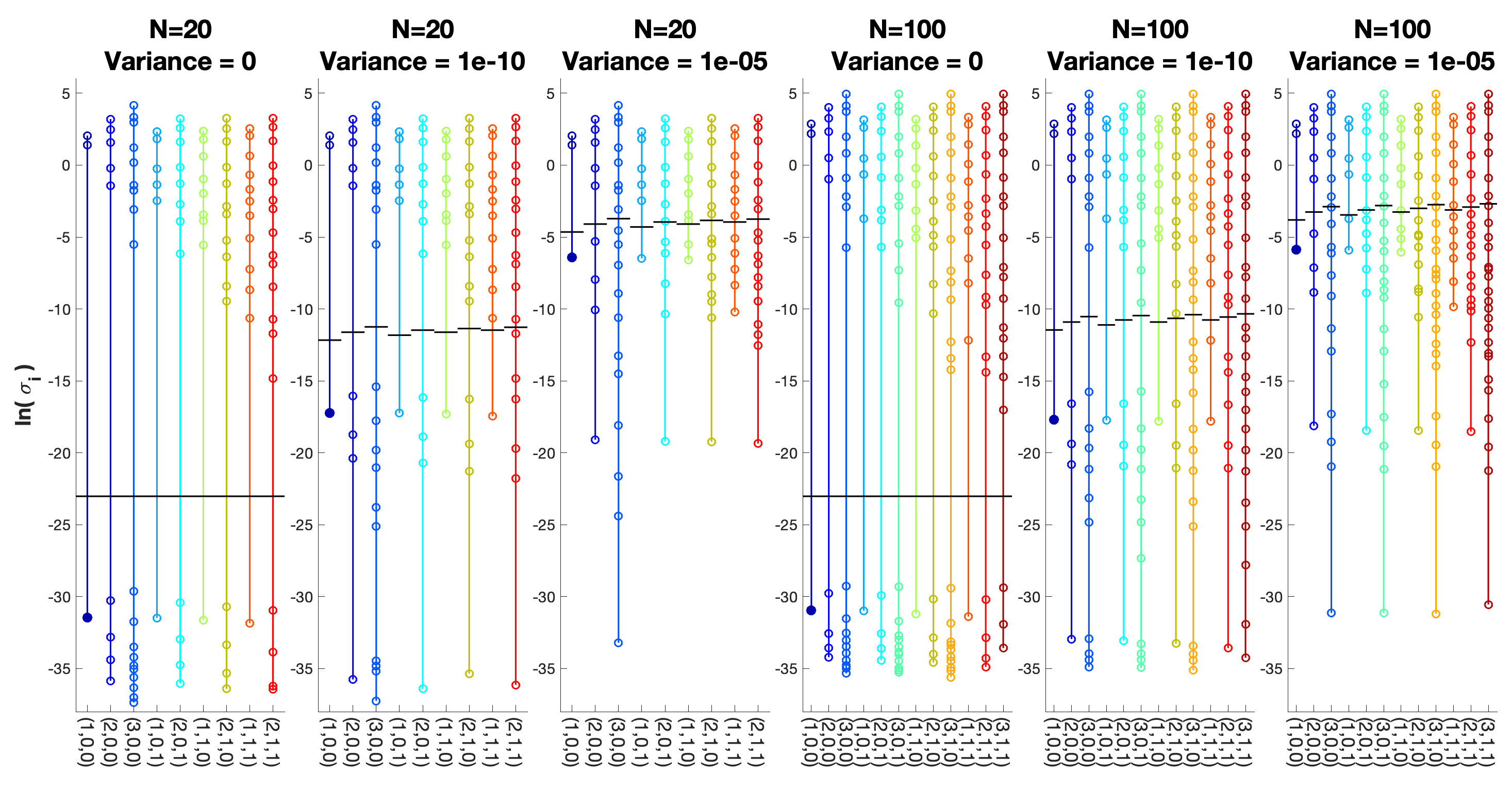}

    \caption{Singular values corresponding to different library configurations for Example 1 with $N$ points and given noise variance. 
    }
    \label{fig:volpert_sing}
\end{figure}

Table \ref{tab:volpert_vecs} contains error values for the data, $\Gamma$, the conserved quantity, and the recovered law, in original and reduced form. As can be seen, as we increase the noise added to the system, we are also increasing the overall error in the conserved quantity and recovered vector, although it remains close to the variance of the noise added.  For example, in the first setup with 20 points and no noise, the recovered conservation law is:
\begin{equation}
\label{eq:volpert_1}
0.5774\dot{x}_1+0.5774\dot{x}_2+0.5774\dot{x}_3 = 0 \hspace{2em} \Rightarrow \hspace{2em} x_1+x_2+x_3 = C
\end{equation}

In all cases, assuming appropriate cutoffs are defined, we were able to achieve an accuracy of 100\% when tested on 1000 randomly generated data sets. 
\begingroup
\begin{table}[H]
    \centering
    \renewcommand{\arraystretch}{1.5}
    \begin{tabular}{|c|c|c|c|c|c|c|c|}\hline
    N & $\|\varepsilon_x\|$ & $\lVert \varepsilon_{\dot{\textbf{x}}}\rVert  $& $\lVert \mathcal{E}_{\Gamma}\rVert  $& $\sum\limits_{j}\lVert \tilde{\Gamma}\xi_{\textrm{opt},j}\rVert$ & $\sum\limits_{j}\lVert \tilde{\Gamma}\xi_{\textrm{rref},j}\rVert$  & $\lVert \xi_{\textrm{exact}}-\xi_{\textrm{rref}}\rVert$& Accuracy \\\hline
         \multirow {3}{4em}{20 points}& 0 &0.1836 & 0.1836 & 2.179e-14 & 3.746e-14 & 3.144e-15 & 100\%\\\cline{2-8}
        & 5.4849e-10 & 0.1836 &0.1836 & 3.326e-8 & 5.761e-8 & 1.207e-9& 100\%\\ \cline{2-8}
         & 4.3606e-5 & 0.1834 & 0.1834 & 1.608e-3 & 2.785e-3 & 1.390e-4& 100\%\\ \hline
         \multirow {3}{4em}{100 points} & 0 & 0.05127 & 0.05127 & 3.538e-14 & 6.144e-14 & 1.351e-15& 100\%\\ \cline{2-8}
        & 4.7594e-10 & 0.05127 & 0.5127 & 2.021e-8 & 3.500e-8 & 9.294e-10& 100\%\\ \cline{2-8}
         & 4.579e-5 & 0.0513 & 0.0513 & 2.774e-3 & 4.805e-3 & 1.019e-4 & 100\% \\ \hline
    \end{tabular}
    \caption{Table of errors related to the found conservation law for Example 1.}
    \label{tab:volpert_vecs}
\end{table}
\endgroup

\subsection{Example 2}
\label{sec:examples-twocons}
In Figure \ref{fig:twocons_sing} we see the optimal library will again contain only first order polynomials, however, we now have two $\sigma_i<\sigma_{\textrm{cutoff}}$, indicating we have two recovered conservation laws. It is interesting to note that two distinct conservation laws are recovered rather than one combined law.

\begin{figure}[h]
 \includegraphics[width = 1.0\textwidth]{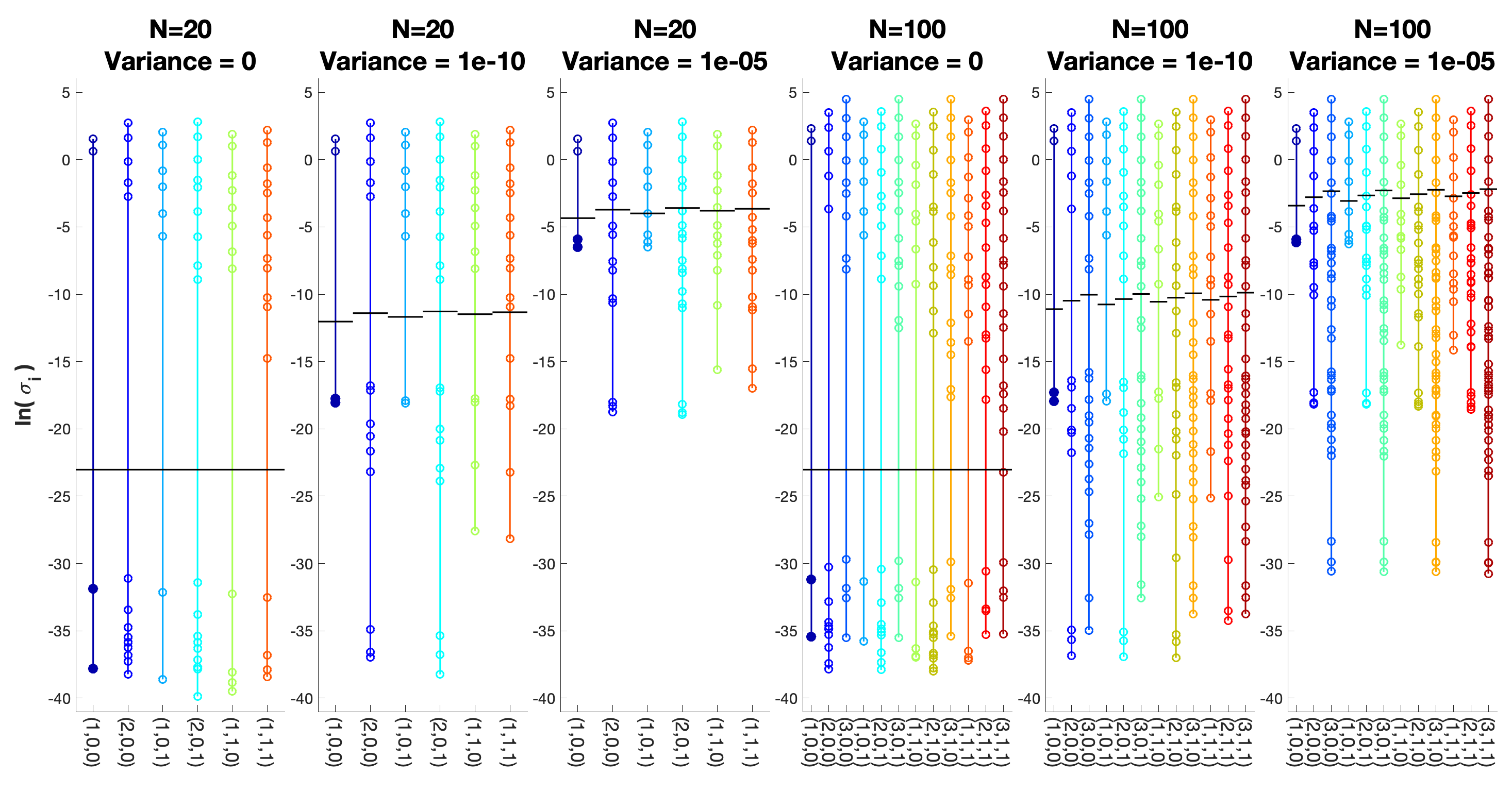}

    \caption{Singular values corresponding to different library configurations for Example 2 with $N$ points and given noise variance. 
    }
    \label{fig:twocons_sing}
\end{figure}
 Despite needing to recover two conservation laws, Table \ref{tab:twocons_vecs} contains a similar trend occurring between $\varepsilon_{\textbf{x}}$ and the error incurred in the recovered law and conserved quantity. As before, we recovered the correct form of the conservation law 100\% of the time for 1000 randomly generated test sets.

\begingroup
\begin{table}[H]
    \centering
    \renewcommand{\arraystretch}{1.5}
    \begin{tabular}{|c|c|c|c|c|c|c|c|}\hline
    N & $\|\varepsilon_x\|$ & $\lVert \varepsilon_{\dot{\textbf{x}}}\rVert  $& $\lVert \mathcal{E}_{\Gamma}\rVert  $& $\sum
    \limits_{j}\lVert \tilde{\Gamma}\xi_{\textrm{opt},j}\rVert$ & $\sum\limits_{j}\lVert \tilde{\Gamma}\xi_{\textrm{rref},j}\rVert$  & $\lVert \xi_{\textrm{exact}}-\xi_{\textrm{rref}}\rVert$& Accuracy \\\hline
        \multirow {3}{4em}{20 points}& 0 & 0.32781 & 0.32781 & 1.472e-14 & 2.322e-14 & 6.026e-15& 100\%\\ \cline{2-8}
        & 5.4588e-10& 0.3278 & 0.3278 & 3.402e-8 & 5.453e-8 & 2.065e-9& 100\%\\ \cline{2-8} 
        & 5.5253e-5 & 0.3280& 0.3280 & 4.199e-3 & 7.029e-3 & 4.432e-4& 100\%\\ \hline
        \multirow {3}{4em}{100 points} &0 & 0.1045 & 0.1045 & 3.034e-14 & 4.754e-14 & 5.989e-15& 100\%\\ \cline{2-8}
        & 6.522e-10 &0.1045 & 0.1045 & 4.740e-8 & 9.034e-8 & 1.706e-9& 100\%\\ \cline{2-8}
        & 6.6525e-5 & 0.1049 & 0.1049 & 4.733e-3 & 7.136e-3 & 1.760e-4 & 100\%\\ \hline
    \end{tabular}
    \caption{Table of  errors corresponding to the found conservation law for Example 2.}
    \label{tab:twocons_vecs}
\end{table}
\endgroup

\subsection{Example 3}
\label{sec:examples-chemcial}
Our previous examples show we can recover both single and multiple conservation laws, however, in each case, the recovered law was linear. In Example 3, we will consider a system which contains a nonlinear conservation law. In almost all the cases shown in Figure \ref{fig:chemical_sing}, libraries containing both first order polynomials and logarithmic terms are optimal. The exception to this is when $N=100$, and variance was $1e-5$. In this case, we recover the correct form, however two laws are obtained. It is also worth note that in all cases libraries containing only first order polynomials failed to have any $\sigma_i<\sigma_{\textrm{cutoff}}$.

\begin{figure}[h]
     \includegraphics[width = 1.0\textwidth]{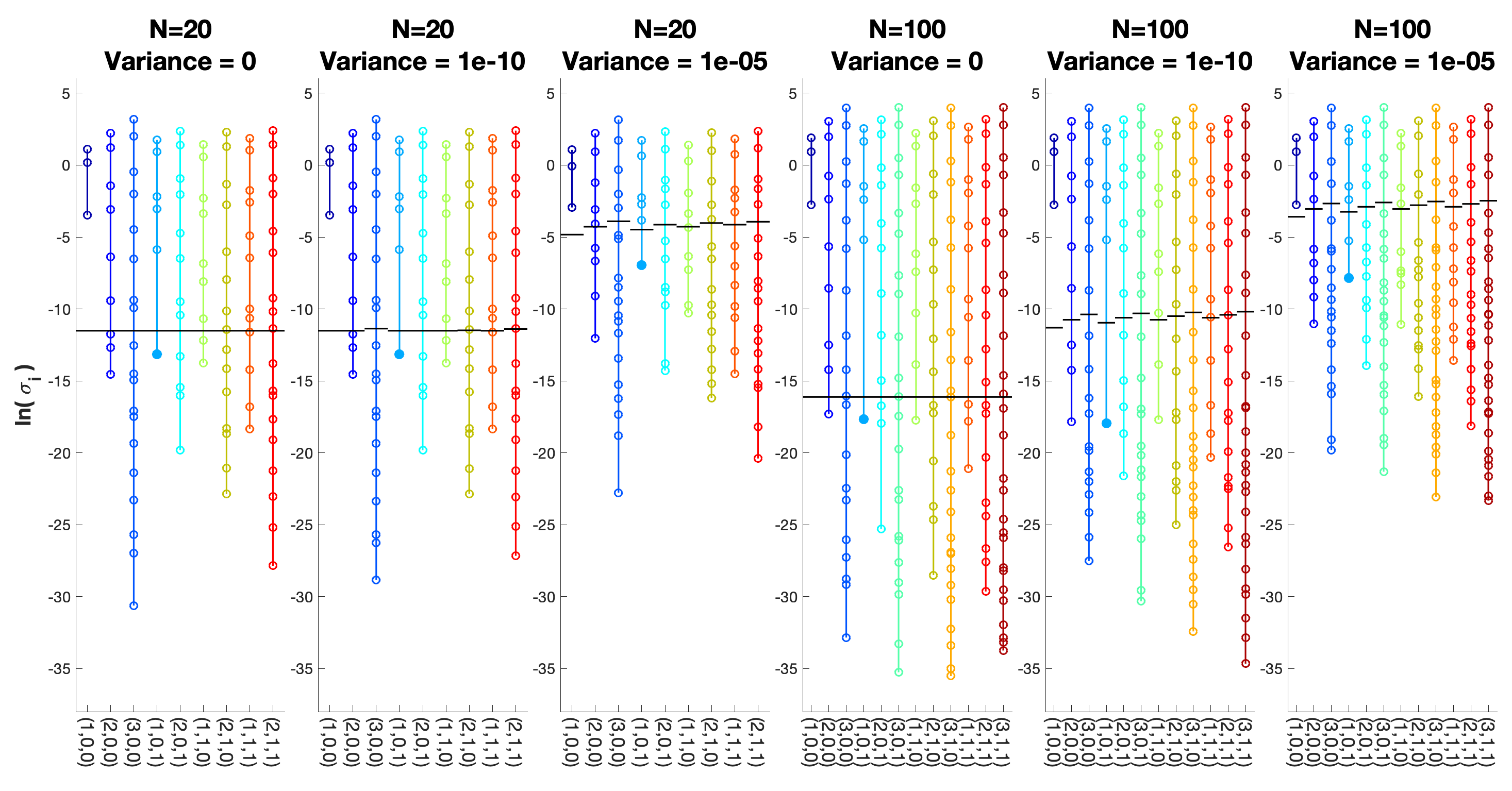}
    \caption{Singular values corresponding to different library configurations for Example 3 with $N$ points and given noise variance.
    }
    \label{fig:chemical_sing}
\end{figure}
For this system, noise from both the derivatives as well as the data play a part in the noise in $\Gamma$. Regardless of the increase in sources of error for $\Gamma$, Table \ref{tab:chemical_vecs} maintains the relationship between $\mathcal{E}_{\Gamma}$ and the conserved quantity. Furthermore, we were able to achieve the optimal library of the conservation law over 98\% of the time for 1000 randomly generated data sets. Further refinement of the derivative calculations could see an increase in the overall method accuracy. 

\begingroup
\begin{table}[H]
    \centering
    \renewcommand{\arraystretch}{1.4}
    \begin{tabular}{|c|c|c|c|c|c|c|c|}\hline
     N & $\|\varepsilon_x\|$ & $\lVert \varepsilon_{\dot{\textbf{x}}}\rVert  $& $\lVert \mathcal{E}_{\Gamma}\rVert  $& $\sum
    \limits_{j}\lVert \tilde{\Gamma}\xi_{\textrm{opt},j}\rVert$ & $\sum\limits_{j}\lVert \tilde{\Gamma}\xi_{\textrm{rref},j}\rVert$  & $\lVert \xi_{\textrm{exact}}-\xi_{\textrm{rref}}\rVert$& Accuracy \\\hline
        \multirow {3}{4em}{20 points} & 0 & 1.1899e-4 & 2.811e-4 & 1.974e-6 & 2.269e-6& 6.946e-4 & 100\%\\ \cline{2-8}
        & 4.5634e-10 & 1.1899e-4 & 2.811e-4 & 1.973e-6 & 2.267e-6 & 6.950e-4 & 100\% \\\cline{2-8}
        & 3.310e-5 & 5.4656e-1 & 1.117 & 9.561e-4 & 1.026e-3 & 0.6206 & 98.6\%\\ \hline
        \multirow {3}{4em}{100 points}& 0 & 4.4893e-8 & 1.674e-7 & 2.141e-8 & 2.460e-8 & 1.072e-6 & 100\%\\ \cline{2-8}
        & 6.0004e-10 & 7.5211e-8 & 1.477e-7 & 1.594e-8 & 1.831e-8 & 6.129e-7 & 99.9\%\\ \cline{2-8}
        & 6.3398e-5 & 2.0047e-3 & 5.553e-3 & 5.547e-3 & 9.219e-3 & NaN & 100\%\\ \hline
    \end{tabular}
    \caption{Table of errors corresponding to the found conservation law(s) for Example 3. For the case of $N=100$ and $\|\varepsilon_{\textbf{x}}\| = 6.3398e-5$, we obtain two conservation laws rather than one, therefore no direct comparison can be made between the exact form and the recovered laws.}
    \label{tab:chemical_vecs}
\end{table}
\endgroup

\subsection{Example 4}
\label{sec:examples-nocons}
Our final benchmark example contains no conservation law, testing if the algorithm will output no optimal library. In Figure \ref{fig:noncons_sing}, we expect to see all singular values above the designated cutoff, but this is not the case for examples containing noise. While the algorithm may emit a conservation law, it is important to examine the results and verify they correlate with the system. In cases where the algorithm selected a library the corresponding $\bm{\xi}$ is both dense and associated with large libraries. 
To avoid this, either decrease $\sigma_{\textrm{cutoff}}$ or exclude large libraries that can be over-fitted. 

\begin{figure}[h]
\begin{center}
     \includegraphics[width = 0.96\textwidth]{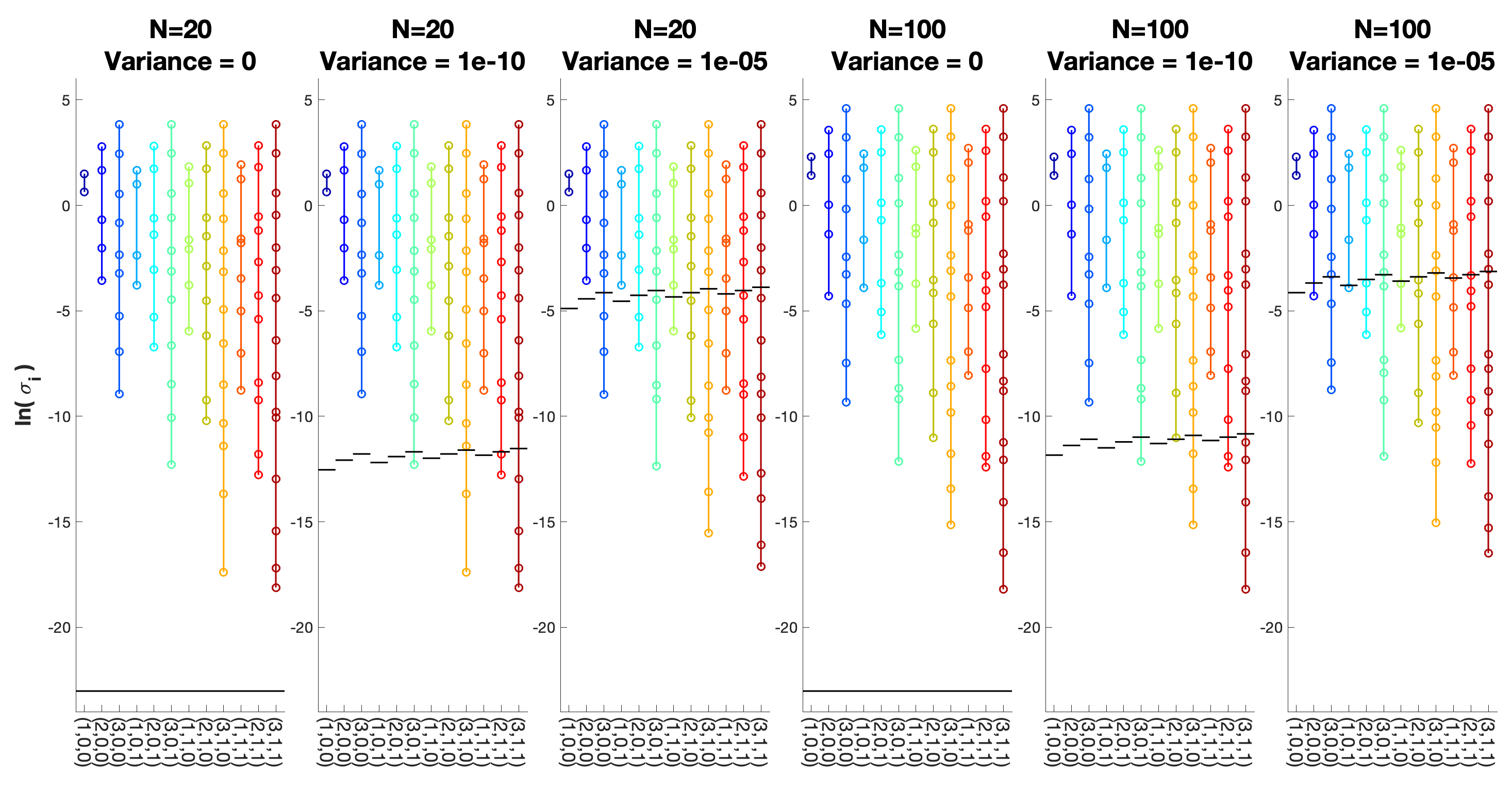}
    \caption{Singular values corresponding to different library configurations for Example 4 with $N$ points and given noise variance.
    }
    \label{fig:noncons_sing}
    \end{center}
\end{figure}

In Table \ref{tab:nocons_vecs}, we have outlined the results of the algorithm when applied to Example 4. In the case of noisy data, one needs to examine the output to verify it is indeed a conservation law. In this case, all the ``found conservation  laws" are dense and are a byproduct of the minimization process.  

\begingroup
\begin{table}[H]
    \centering
    \renewcommand{\arraystretch}{1.25}
    \begin{tabular}{|c|c|c|c|c|c|c|}\hline
     N & $\|\varepsilon_x\|$ & $\lVert \varepsilon_{\dot{\textbf{x}}}\rVert  $& Optimal Library&  count & $\sum
    \limits_{j}\lVert \tilde{\Gamma}\xi_{\textrm{opt},j}\rVert$ & $\sum\limits_{j}\lVert \tilde{\Gamma}\xi_{\textrm{rref},j}\rVert$ \\\hline
        \multirow {3}{4em}{20 points} & 0 & 0.26317 & Not Found & NaN & NaN & NaN \\ \cline{2-7}
        & 4.5288e-10 & 0.26317 & (2, 1, 1) & 2 & 1.024e-5 & 1.270e-5 \\\cline{2-7}
        & 4.1076e-5 & 0.26315 & (2, 0, 1)& 2 & 6.205e-3 & 8.672e-3 \\ \hline
        \multirow {3}{4em}{100 points}& 0 & 0.0761 & Not Found & NaN & NaN & NaN\\ \cline{2-7}
        & 3.6436e-10 & 0.07612 & (3, 0, 1) & 1 & 5.290e-6 & 6.484e-6\\ \cline{2-7}
        & 3.7872e-5 & 0.07625 & (2, 0, 0) & 1 & 0.01377 & 0.01718 \\ \hline
    \end{tabular}
    \caption{Table of errors corresponding to the found conservation law(s) for Example 4. }
    \label{tab:nocons_vecs}
\end{table}
\endgroup

\subsection{Biological Application}
\label{sec:examples-mapk}
While we have shown we can recover various forms of conservation, all benchmark examples utilize mass-action kinetics. To show our approach is independent of the system dynamics let us consider instead a system with Michaelis–Menten kinetics, common in biological systems. We consider the Mitogen Activated Protein Kinase (MAPK) Pathway presented in \cite{Kholodenko10} containing 9 state variables (3 proteins each with 3 states of phosphorylation) and emits 3 conservation laws. The dynamical system representation and parameters used will be included in the Supplemental Notes. Data is generated over $t=[0:dt:1000]$ with $dt = \frac{1}{N}$. As before, i.i.d. Gaussian noise will be added and derivatives are approximated with Tikhonov regularization. Singular values for libraries considered are shown in Figure \ref{fig:kholodenko_sing}. In all cases, the algorithm recovers 3 linear conservation laws.

\begin{figure}[H]
     \centering
 \includegraphics[width = 0.97\textwidth]{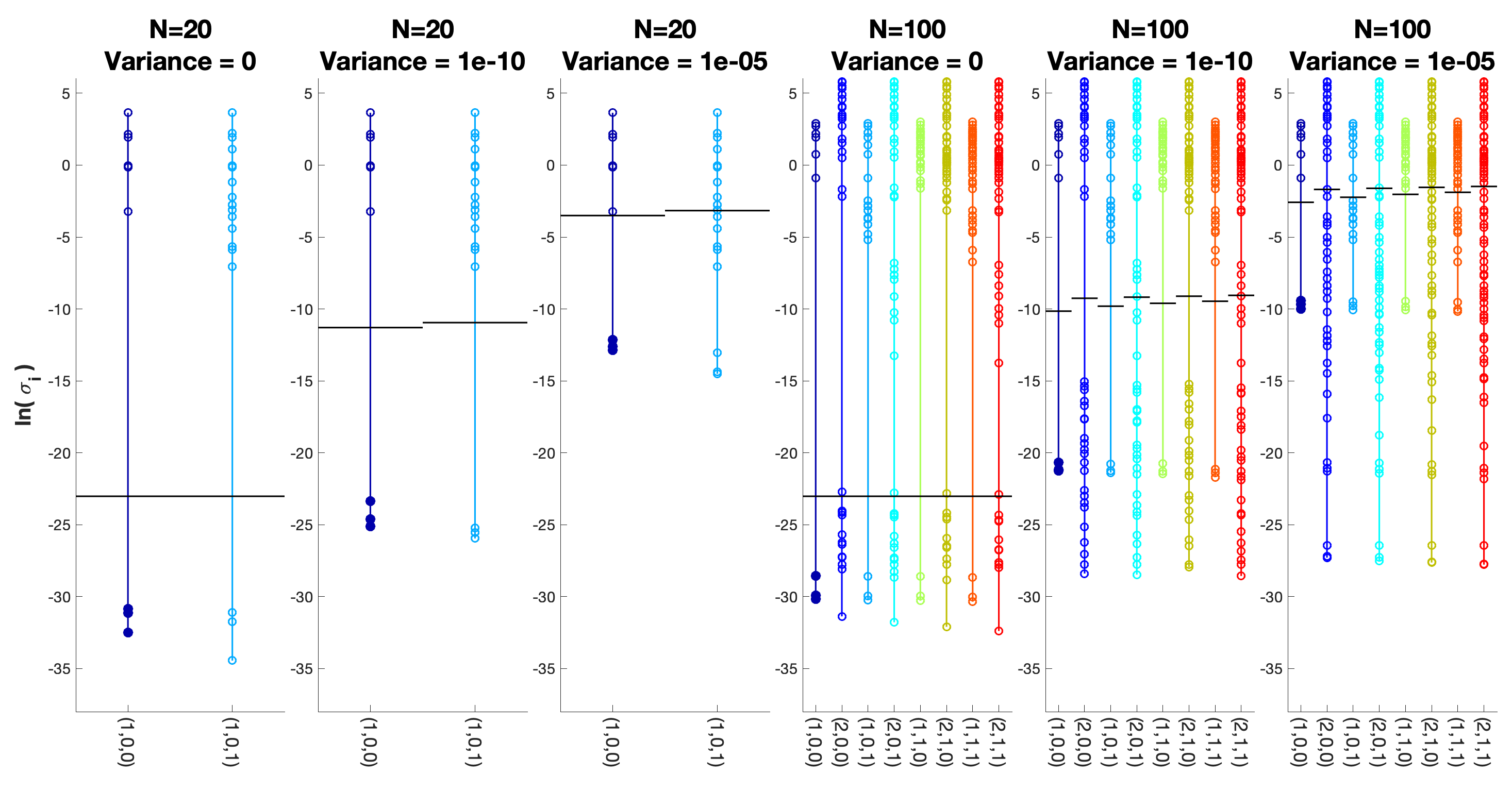}

    \caption{Singular values corresponding to different library configurations for the MAPK model \cite{Kholodenko10} with $N$ points and given noise variance. 
    }
    \label{fig:kholodenko_sing}
\end{figure}
Table \ref{tab:kholodenko_vecs} provides error analysis for the recovered conservation laws. While the overall error in $\Gamma$ has increased, most likely due to the change in scale of the data, the results are consistent with the previous examples. In all cases, we recovered the correct library 100\% of the time.
\begingroup
\begin{table}[H]
    \centering
    \renewcommand{\arraystretch}{1.5}
    \begin{tabular}{|c|c|c|c|c|c|c|c|}\hline
    N & $\|\varepsilon_x\|$ & $\lVert \varepsilon_{\dot{\textbf{x}}}\rVert  $& $\lVert \mathcal{E}_{\Gamma}\rVert  $& $\sum
    \limits_{j}\lVert \tilde{\Gamma}\xi_{\textrm{opt},j}\rVert$ & $\sum\limits_{j}\lVert \tilde{\Gamma}\xi_{\textrm{rref},j}\rVert$  & $\lVert \xi_{\textrm{exact}}-\xi_{\textrm{rref}}\rVert$& Accuracy \\\hline
         \multirow {3}{4em}{20 points} & 0 & 37.31 & 37.31 & 7.828e-14 & 1.383e-13 & 7.057e-13 & 100\%\\ \cline{2-8}
         & 8.8961e-10 & 37.31 & 37.31 & 1.035e-10 & 1.962e-10 & 1.124e-9& 100\%\\\cline{2-8}
        & 1.0553e-4 & 37.31 & 37.31 & 1.114e-5 & 1.944e-5 & 7.386e-5& 100\%\\ \hline
        \multirow {3}{4em}{100 points} & 0 & 4.052 & 4.052 & 5.838e-13 & 1.013e-12 & 4.588e-13& 100\%\\ \cline{2-8}
        & 1.4759e-9 & 4.052 & 4.052 & 2.255e-9 & 4.001e-9 & 9.180e-10& 100\%\\ \cline{2-8}
            & 1.2644e-4 & 4.052 & 4.052 & 1.915e-4 & 3.356e-4 & 9.087e-5& 100\% \\ \hline
    \end{tabular}
    \caption{Table of errors corresponding to the found conservation laws for the MAPK pathway in \cite{Kholodenko10}.}
    \label{tab:kholodenko_vecs}
\end{table}
\endgroup

\section{Discussion}
\label{sec:discussion}
In this work, we have presented an algorithm which can be used to infer conservation laws from data, including the case of limited noisy data. We tested our approach on several systems ranging in complexity and found being useful for both linear and nonlinear dynamical systems, allowing us to identify conservation laws regardless of the underlying structure.  We have shown we can recover both nonlinear conservation in Example 3 as well as multiple conservation laws as demonstrated in Example 2 and the MAPK network. 

Robustness of the proposed methodology is based on the provable  stability of the singular values and singular vectors to the level of noise present in the data. While there are general results of this nature in the literature \cite{Demmel,Shuai06}, the exact error characterizations depend on the type of the matrix used. Our numerical investigations confirm that low sensitivity is preserved in the case of library matrix $\Gamma(\bX, \dot \bX)$ used in the context of present application. In particular, the error $\varepsilon_{\sigma_i}$ for the smallest singular values of $\Gamma$ is tightly bounded above by derivative estimation error. Since this error may be significant in some cases, the accuracy of the derivative approximation plays an essential role in this discussion. A smoothing approach based on Tikhonov regularization has shown potential to address  this issue based on our numerical experiments.

Overall, guided by our analytical and numerical observations, we developed the following strategy for choosing an appropriate $\sigma_{\textrm{cutoff}}$. As seen in Section \ref{sec:noise}, if the underlying system noise can be estimated a priori, then $\sigma_{\textrm{cutoff}}$ will be proportional to the error, although it may require tuning. Furthermore, provided an approximate error in the derivative calculation, $\mathcal{E}_{\Gamma}$ can be estimated and $\sigma_{\textrm{cutoff}}$ can be chosen such that it lies below $\mathcal{E}_{\Gamma}$. If the error in the data is not accessible, then possible solutions include pre-processing the data to remove noise, finding more accurate derivative approximations, or methods of de-noising $\Gamma$ such as those presented in \cite{Epps19}. We can also consider optimizing $\sigma_{\textrm{cutoff}}$ or finding alternative methods to remove it entirely. 

In addition, we see that while the use of RREF and LU decomposition allows for significant improvement in our ability to find the reduced form of the functional relationship, for some choices of the libraries it may result in fragmentation of the recovered conservation laws. 

 While we restricted library options to 12 possible forms, different systems may benefit from additional library options. In this work, each $\Theta$-library is constructed by choosing classes of functions, such as $\textbf{x}$ or $\textbf{x}^3$ and iterates though subsets of the chosen classes, although this limitation is not required for Algorithm \ref{algor:opttheta} to function properly. 
 For systems containing multiple laws, not necessarily all of the same order, allowing libraries to contain only a subset of proposed classes proves to be a more effective strategy. For example, consider the 3-species Lotka-Volterra model presented in \cite{Schimming03} containing both a first order polynomial and a third order polynomial conservation law..
\begin{equation*}
    \begin{aligned}
        &\dot{x}_1 = x_1(x_2-x_3)&\\
        &\dot{x}_2 = x_2(x_3-x_1)&\\
        &\dot{x}_3 = x_3(x_1-x_2)&\\[.25em]
        &x_1+x_2+x_3 = C_1&\\
        &x_1x_2x_3 = C_2&
    \end{aligned}
\end{equation*}
If we consider only classes of functions in the library, the algorithm will gravitate towards the library containing only first order polynomials. If subsets of $\Theta = [\textbf{x}, \textbf{x}^2,\textbf{x}^3]$ are now included, particularly the subsets of $[\textbf{x},\textbf{x}^3]$, we are more likely to recover both laws. We do note that for $\Theta = [x_1, x_2,x_3,x_1x_2x_3]$, we recover both laws exactly, however the corresponding $\delta$ is within machine epsilon of the $\delta$ for $\Theta = [x_1,x_2,x_3]$, indicating that it may be advantageous to investigate several libraries with the largest $\delta$ to ensure all possible conservation laws are recovered.

Building off these observations, constructing a global library of all possible functions and iterating over the power set of this global library, excluding the empty set and sets containing a single state variable, seems to be the next logical step. While this approach will more accurately infer all possible conservation laws, it will be significantly more computationally demanding for if $\Theta_{\textrm{global}}^T\in \mathbb{R}^{k}$, $2^{k}-1-k$ sub-libraries should be considered in the algorithm.
Moreover, we observed for systems with multiple conservation laws originating from different sub-libraries, $\delta$ can be significantly close. To alleviate this, we propose user supervision might be needed if there is a close decision. 

Future work will be focused on ways to derive minimal consistent symbolic representations of the recovered conserved quantities and applying the methodology to experimental data.

\section{Acknowledgements}
The authors gratefully acknowledge fruitful discussions with Mariaelena Pieorbon. ME was supported in part by the Simons Foundation grant \#854541. TO was partially funded by the George Mason University Provost through their Summer Research Fellowship and Dissertation Completion Grant.

    \bibliographystyle{ieeetr}

  \bibliography{references}

  \newpage
  \appendix
\section{Supplemental Notes}
Table \ref{tab:theta_lib} contains a list of all $\Theta$ libraries used for the analysis in Section \ref{sec:testing}. 
\begingroup
\begin{table}[H]
    \caption{Table of $\Theta$ libraries considered for all graphs in Section \ref{sec:testing}.}
    \centering
    \renewcommand{\arraystretch}{1.5}
    \begin{tabular}{c|l}
         Triple&  Library\\ \hline
         (1, 0, 0)& $\Theta = [\textbf{X}]$ \\ \hline
    (2, 0, 0)& $\Theta = [\textbf{X}, \textbf{X}^2]$ \\ \hline
    (3, 0, 0)& $\Theta = [\textbf{X}, \textbf{X}^2, \textbf{X}^3]$ \\ \hline
    (1, 1, 0)& $\Theta = [\textbf{X}, \sin(\textbf{X}), \cos(\textbf{X})]$ \\ \hline
    (2, 1, 0)& $\Theta = [\textbf{X}, \textbf{X}^2, \sin(\textbf{X}), \cos(\textbf{X})]$ \\ \hline
    (3, 1, 0)& $\Theta = [\textbf{X}, \textbf{X}^2, \textbf{X}^3 \sin(\textbf{X}), \cos(\textbf{X})]$ \\ \hline
    (1, 0, 1)& $\Theta = [\textbf{X}, \ln(\textbf{X})]$ \\ \hline
    (2, 0, 1)& $\Theta = [\textbf{X}, \textbf{X}^2, \ln(\textbf{X})]$ \\ \hline
    (3, 0, 1)& $\Theta = [\textbf{X}, \textbf{X}^2, \textbf{X}^3, \ln(\textbf{X})]$ \\ \hline
    (1, 1, 1)& $\Theta = [\textbf{X}, \sin(\textbf{X}), \cos(\textbf{X}), \ln(\textbf{X})]$ \\ \hline
    (2, 1, 1)& $\Theta = [\textbf{X}, \textbf{X}^2, \sin(\textbf{X}), \cos(\textbf{X}), \ln(\textbf{X})]$ \\ \hline
    (3, 1, 1)& $\Theta = [\textbf{X}, \textbf{X}^2, \textbf{X}^3 \sin(\textbf{X}), \cos(\textbf{X}), \ln(\textbf{X})]$   
    \end{tabular}
    \label{tab:theta_lib}
\end{table}
\endgroup

The following results correspond to the figures presented in Section \ref{sec:testing}. Table \ref{tab:reference} details what values will be recorded. 

 \begingroup
\begin{table}[H]
\caption{Reference table}
 \renewcommand{\arraystretch}{1.5}
\centering
\begin{tabular}{c|l}
Label & Interpretation \\\hline
$\varepsilon_\textbf{x}$ & Error in the data - $\varepsilon_{\textbf{x}} = \lVert \textbf{x} - \tilde{\textbf{x}}\rVert $ \\
Model&   ordered triple corresponding to the form of the $\Theta$-library. See Table \ref{tab:theta_lib} for reference. \\
 len($\Theta$) & Length of $\Theta$\\ 
$\sigma_{\textrm{cutoff}}$ & Cutoff value used for library choice\\
$\delta$ & $\sigma$- difference as calculated in Equation \eqref{eq:sigmagap} \\ 
count &count as calculated in Equation \eqref{eq:count} taking into consideration the threshold value \\ 
$\sum_j\lVert \Gamma \xi_{cons,j}\rVert_2$ & Sum of 2-norms of found conservation(s) 
\end{tabular}
\label{tab:reference}
\end{table}
\endgroup
\newpage
\subsection{Example 1}
Below are the tables corresponding to the figures in Section \ref{sec:examples-volpert}. 

\begingroup
\begin{table}[H]
    \centering
    \renewcommand{\arraystretch}{1.1}
    \small{
    \begin{tabular}{|c|c|c|c|c|c|c|}\hline
     $\varepsilon_\textbf{x}$ & Model & $\textrm{len}(\Theta)$ & $\sigma_{\textrm{cutoff}}$ & $\delta$& count & $\sum_j\lVert \Gamma*\xi_j\rVert$ \\ \hlinewd{1.5pt}
                        \multirow{9}*{0}
                                & \textbf{(1, 0, 0)} & \textbf{3.0}& \textbf{1.0e-10} & \textbf{4.0579} & \textbf{1.0} & \textbf{2.1789e-14}\\\cline{2-7}
                                & (2, 0, 0) & 9.0 & 1.0e-10 & 0.23527 & 4.0 & 8.1832e-14\\ \cline{2-7}
                                & (3, 0, 0) & 19.0 & 1.0e-10 & 0.0039731 & 10.0 & 1.7251e-13\\ \cline{2-7}
                                & (1, 0, 1)  & 6.0 & 1.0e-10  & 0.084417 & 1.0 &8.3465e-14\\ \cline{2-7}
                                & (2, 0, 1) & 12.0 & 1.0e-10& 0.0021132 & 4.0 & 3.5442e-12\\ \cline{2-7}
                                & (1, 1, 0) &9.0 & 1.0e-10 & 0.0039081 & 1.0 & 4.2259e-12\\\cline{2-7}
                                & (2, 1, 0)  & 15.0 & 1.0e-10  & 0.000077496 & 4.0 &9.0724e-12\\ \cline{2-7}
                                & (1, 1, 1) & 12.0 &1.0e-10 & 0.000023991 & 1.0 & 1.4756e-14\\ \cline{2-7}
                                & (2, 1, 1)  & 18.0 & 1.0e-10 & 3.647e-7 & 4.0 & 4.4435e-14\\ \hline
                                \multicolumn{7}{|c|}{\textbf{Recovered Law:} $0.57735x_1+0.57735x_2+0.57735x_3 = C$}\\ 
                                \multicolumn{7}{|c|}{\textbf{Reduced Law:} $x_1+x_2+x_3 = C$} \\ \hhline{|=|=|=|=|=|=|=|}
                        \multirow{9}*{5.4849e-10}
                                & \textbf{(1, 0, 0)}& \textbf{3.0} & \textbf{5.1902e-6} & \textbf{4.0579} & \textbf{1.0} & \textbf{3.3263e-8}\\ \cline{2-7}
                                & (2, 0, 0)& 9.0 & 8.9898e-6  & 0.23527 & 4.0 & 1.1613e-7\\ \cline{2-7}
                                & (3, 0, 0)& 19.0 & 0.000013062 & 0.0039729 & 10.0 & 2.2526e-7\\ \cline{2-7}
                                & (1, 0, 1)& 6.0 & 8.9898e-6 & 0.0039081 & 1.0 & 3.0055e-8\\ \cline{2-7}
                                & (2, 0, 1)& 12.0 & 0.00001038  & 0.0021131 & 4.0 & 1.0381e-7\\\cline{2-7} 
                                & (1, 1 , 0)& 9.0 & 8.9898e-6 & 0.0039081 & 1.0 & 3.0055e-8\\ \cline{2-7}
                                & (2, 1 , 0)& 15.0 & 0.000011606 & 0.00007741 & 4.0 & 9.1386e-8\\\cline{2-7}
                                & (1, 1 , 1)& 12.0 & 0.00001038 &  0.000023963 & 1.0 & 2.6291e-8\\ \cline{2-7}
                                & (2, 1 , 1)& 18.0 & 0.000012713 &  0.000014258 & 6.0 & 8.735e-6\\ \hline
                                \multicolumn{7}{|c|}{\textbf{Recovered Law:} $-0.57735x_1-0.57735x_2-0.57735x_3 = C$}\\ 
                                \multicolumn{7}{|c|}{\textbf{Reduced Law:} $x_1+x_2+x_3 = C$} \\ \hhline{|=|=|=|=|=|=|=|}
                         \multirow{9}*{4.3606e-05}
                                & \textbf{(1, 0, 0)}& \textbf{3.0}& \textbf{0.0095964} & \textbf{4.0563} & \textbf{1.0} &\textbf{0.001608}\\ \cline{2-7}
                                 & (2, 0, 0)& 9.0 & 0.016621 & 0.22969 & 4.0 & 0.00534\\ \cline{2-7}
                                 & (3, 0, 0)& 19.0 & 0.02415 & 0.035632 & 11.0 & 0.015822\\ \cline{2-7}
                                 & (1, 0, 1)& 6.0 & 0.013571  & 0.082811 & 1.0 &0.0015219\\ \cline{2-7}
                                 & (2, 0, 1)& 12.0 & 0.019193  & 0.01559 & 5.0 & 0.0071915\\ \cline{2-7}
                                 & (1, 1 , 0)& 9.0 &0.016621 & 0.016893& 2.0 & 0.005356\\ \cline{2-7}
                                 & (2, 1 , 0)& 15.0 & 0.021458  & 0.027648 & 8.0 & 0.012539\\ \cline{2-7}
                                 & (1, 1 , 1)& 12.0 & 0.019193 & 0.023277 & 5.0 & 0.0093485\\ \cline{2-7}
                                 & (2, 1 , 1)& 18.0 & 0.023506  & 0.037703 & 11.0 &0.017979\\\hline

                                 \multicolumn{7}{|c|}{\textbf{Recovered Law:} $-0.57732x_1-0.57739x_2-0.57735x_3 = C$}\\ 
                                \multicolumn{7}{|c|}{\textbf{Reduced Law:} $0.9999x_1+x_2+0.9999x_3 = C$} \\  \hline
\end{tabular}
}%
    \caption{Table of values corresponding to $N=20$ for Example 1 (the left graphs of Figure \ref{fig:volpert_sing})}
    \label{tab:volpert_sing20}
\end{table}
\endgroup

\begingroup
\begin{table}[H]
    \centering
    \renewcommand{\arraystretch}{1.1}
    \small{
    \begin{tabular}{|c|c|c|c|c|c|c|}\hline
     $\varepsilon_\textbf{x}$ & Model & $\textrm{len}(\Theta)$ & $\sigma_{\textrm{cutoff}}$ & $\delta$& count & $\sum_j\lVert \Gamma*\xi_j\rVert$ \\ \hlinewd{1.5pt}
                        \multirow{12}*{0.0}                                
                                & \textbf{(1, 0, 0)}& \textbf{3.0} & \textbf{1.0e-10}  &\textbf{ 8.8148} & \textbf{1.0} & \textbf{3.5381e-14}\\\cline{2-7}
                                & (2, 0, 0)& 9.0 & 1.0e-10  & 0.37496 & 4.0 & 1.4089e-13\\ \cline{2-7}
                                & (3, 0, 0)& 19.0 &1.0e-10  & 0.0032626 & 10.0 & 2.9205e-13\\ \cline{2-7}
                                & (1, 0, 1)& 6.0 & 1.0e-10 & 0.023855 & 1.0 &4.6508e-14\\ \cline{2-7}
                                & (2, 0, 1)& 12.0 &1.0e-10  & 0.0033704 & 4.0 & 1.5425e-12\\ \cline{2-7}
                                & (3, 0, 1)& 22.0 & 1.0e-10  & 0.00006919 & 11.0 & 2.9114e-13\\ \cline{2-7}
                                & (1, 1, 0)& 9.0 & 1.0e-10  &0.0063047 & 1.0 & 8.028e-12\\\cline{2-7}
                                & (2, 1, 0)& 15.0 & 1.0e-10  & 0.000032995 & 4.0 & 1.1258e-13\\ \cline{2-7} 
                                & (3, 1, 0)& 25.0 & 1.0e-10  & 6.6346e-7 & 10.0 & 2.6933e-13\\ \cline{2-7}
                                & (1, 1, 1)& 12.0 &1.0e-10  & 5.1936e-6 & 1.0 & 2.3335e-14\\ \cline{2-7}
                                & (2, 1, 1)& 18.0 & 1.0e-10  & 5.581e-7 & 4.0 & 1.0469e-13\\\cline{2-7}
                                & (3, 1, 1)& 28.0 & 1.0e-10  & 4.0946e-8 & 11.0 & 3.1179e-13 \\ \hline
                                
                                \multicolumn{7}{|c|}{\textbf{Recovered Law:} $0.57735x_1+0.57735x_2+0.57735x_3 = C$}\\ 
                                \multicolumn{7}{|c|}{\textbf{Reduced Law:} $x_1+x_2+x_3 = C$} \\ \hhline{|=|=|=|=|=|=|=|}   
                        \multirow{12}*{4.7594e-09}
                                & \textbf{(1, 0, 0)}& \textbf{3.0} & \textbf{0.000010558} &\textbf{ 8.8148} & \textbf{1.0} &\textbf{2.0206e-8}\\ \cline{2-7}
                                & (2, 0, 0)& 9.0 & 0.000018288 & 0.37496 & 4.0 & 6.7902e-8\\ \cline{2-7}
                                & (3, 0, 0)& 19.0 & 0.000026571  & 0.0032625 & 10.0 & 1.5507e-7\\ \cline{2-7}
                                & (1, 0, 1)& 6.0 & 0.000014932  &0.023855 & 1.0 & 1.9563e-8\\ \cline{2-7}
                                & (2, 0, 1)& 12.0 & 0.000021117  & 0.0033704 & 4.0 &6.5934e-8\\ \cline{2-7}
                                & (3, 0, 1)& 22.0 & 0.000028592  & 0.000069042 & 11.0 & 1.5934e-7\\ \cline{2-7}
                                & (1, 1, 0)& 9.0 & 0.000018288  & 0.0063047 & 1.0 & 1.8616e-8\\ \cline{2-7}
                                & (2, 1, 0)& 15.0 & 0.000023609 & 0.000032933 & 4.0 & 6.5453e-8\\ \cline{2-7}
                                & (3, 1, 0)& 25.0 & 0.000030479  & 0.000083452 &13.0 & 6.8034e-6\\ \cline{2-7}
                                & (1, 1, 1)& 12.0 & 0.000021117 & 0.00026537 & 2.0 & 5.2107e-6\\\cline{2-7}
                                & (2, 1, 1)& 18.0 & 0.000025863 & 0.000057904 & 6.0 & 2.2486e-6\\ \cline{2-7}
                                & (3, 1, 1)& 28.0 & 0.000032256 & 0.000081564 & 16.0 & 0.000020748 \\ \hline
                                \multicolumn{7}{|c|}{\textbf{Recovered Law:} $-0.57735x_1-0.57735x_2-0.57735x_3 = C$}\\ 
                                \multicolumn{7}{|c|}{\textbf{Reduced Law:} $x_1+x_2+x_3 = C$} \\ \hhline{|=|=|=|=|=|=|=|} 
                         \multirow{12}*{4.5790e-05}
                                & \textbf{(1, 0, 0)}& \textbf{3.0} & \textbf{0.022169} &  \textbf{8.8123} & \textbf{1.0} &\textbf{0.0027741}\\ \cline{2-7}
                                & (2, 0, 0)& 9.0 &0.038397 & 0.36689 & 4.0 & 0.0094069\\ \cline{2-7}
                                & (3, 0, 0)& 19.0 & 0.05579 &  0.059744 & 12.0 & 0.076484\\ \cline{2-7}
                                & (1, 0, 1)& 6.0 & 0.031351 & 0.503 &2.0 & 0.026322\\ \cline{2-7}
                                & (2, 0, 1)& 12.0 & 0.044337 & 0.37732 & 7.0 & 0.077914 \\ \cline{2-7}
                                & (3, 0, 1)& 22.0 &0.060033  & 0.056876 & 15.0 & 0.08438\\ \cline{2-7}
                                & (1, 1, 0)& 9.0 &0.038397 &0.03069 & 3.0 & 0.020124\\ \cline{2-7}
                                & (2, 1, 0)& 15.0 & 0.049571  & 0.063248 & 9.0 & 0.041129\\ \cline{2-7}
                                & (3, 1, 0)& 25.0 & 0.063995 & 0.061198 & 18.0 & 0.081371\\ \cline{2-7}
                                & (1, 1, 1)& 12.0 & 0.044337 & 0.032679 & 6.0 & 0.043595\\ \cline{2-7}
                                & (2, 1, 1)& 18.0 & 0.054302  & 0.062429 & 12.0& 0.06318\\ \cline{2-7}
                                & (3, 1, 1)& 28.0 &  0.067726 & 0.058391 & 21.0 & 0.088529 \\ \hline
                                \multicolumn{7}{|c|}{\textbf{Recovered Law:} $0.57731x_1+0.57737x_2+0.57737x_3 = C$}\\ 
                                \multicolumn{7}{|c|}{\textbf{Reduced Law:} $0.9999x_1+x_2+0.9999x_3 = C$} \\\hline
\end{tabular}
}%
    \caption{Table of values corresponding to $N=100$ for Example 1 (the right graphs of Figure \ref{fig:volpert_sing})}
    \label{tab:volpert_sing100}
\end{table}
\endgroup

\subsection{Example 2}
Below are the tables corresponding to the figures in Section \ref{sec:examples-twocons}. 

\begingroup
\begin{table}[H]
    \centering
    \renewcommand{\arraystretch}{1.1}
    \small{
    \begin{tabular}{|c|c|c|c|c|c|c|}\hline
     $\varepsilon_\textbf{x}$ & Model & $\textrm{len}(\Theta)$ & $\sigma_{\textrm{cutoff}}$ & $\delta$& count & $\sum_j\lVert \Gamma*\xi_j\rVert$ \\ \hlinewd{1.5pt}
                        \multirow{6}*{0}
                                & \textbf{(1, 0, 0)} & \textbf{4.0} & \textbf{1.0e-10 }& \textbf{1.8474} & \textbf{2.0} & \textbf{1.4716e-14}\\ \cline{2-7}
                                & (2, 0, 0) & 14.0 & 1.0e-10 &  0.063937 & 9.0 & 4.1111e-14\\ \cline{2-7}
                                & (1, 0, 1) & 8.0 &  1.0e-10 &  0.0034213 & 2.0 &1.3616e-12\\ \cline{2-7}
                                & (2, 0, 1) & 18.0 &  1.0e-10 &  0.00013746 & 9.0 & 6.3428e-12\\ \cline{2-7}
                                & (1, 1, 0) & 12.0 & 1.0e-10 & 0.00030492 & 4.0 & 1.7433e-12\\ \cline{2-7}
                                & (1, 1, 1) & 16.0 &1.0e-10 & 3.8709e-7 & 4.0 & 1.0844e-11 \\ \hline

                                \multicolumn{7}{|c|}{\textbf{Recovered Law:} {$\!%
                                \begin{aligned} &-0.31642x_1+0.63246x_2+0.31604x_3+0.63246x_4 = C_1\\
                                & 0.70702x_1+0.00017x_2+0.70719x_3+0.00017x_4 = C_2
                                \end{aligned}
                                                        $} }\\ 
                                                        \multicolumn{7}{|c|}{}\\
                                \multicolumn{7}{|c|}{\textbf{Reduced Law:} {$\!%
                                                         \begin{aligned} &x_1+x_3 = C_1\\
                                                         &x_2+x_3+x_4=C_2
                                                         \end{aligned}
                                                        $} } \\ \hhline{|=|=|=|=|=|=|=|}
                        \multirow{6}*{5.4588e-10}
                                & \textbf{(1, 0, 0)} & \textbf{4.0} & \textbf{5.9742e-6}  & \textbf{1.8474} & \textbf{2.0} & \textbf{3.4017e-8}\\ \cline{2-7}
                                & (2, 0, 0) & 14.0 & 0.000011177  & 0.063937 & 9.0 & 9.1189e-8\\ \cline{2-7}
                                & (1, 0, 1) & 8.0 & 8.4488e-6 & 0.0034213 & 2.0 & 3.0698e-8\\ \cline{2-7}
                                & (2, 0, 1) & 18.0 &0.000012673 &  0.00013741 & 9.0 &8.0334e-8\\ \cline{2-7}
                                & (1, 1, 0) & 12.0 & 0.000010348  & 0.0003049 & 4.0 & 3.4966e-8\\ \cline{2-7}
                                & (1, 1, 1) & 16.0 & 0.000011948  & 0.000017266 & 5.0& 4.149e-7\\\hline

                                \multicolumn{7}{|c|}{\textbf{Recovered Law:} {$\!%
                                \begin{aligned} &0.47612x_1-0.61411x_2-0.13799x_3-0.61411x_4 = C_1\\
                                & -0.61099x_1-0.15122x_2-0.76221x_3-0.15122x_4 = C_2
                                \end{aligned}
                                                        $} }\\ 
                                                        \multicolumn{7}{|c|}{}\\
                                \multicolumn{7}{|c|}{\textbf{Reduced Law:} {$\!%
                                                         \begin{aligned} &x_1+x_3= C_1\\
                                                         &x_2+x_3+x_4=C_2
                                                         \end{aligned}
                                                        $} } \\ \hhline{|=|=|=|=|=|=|=|}
                         \multirow{6}*{5.5253e-05}
                                & \textbf{(1, 0, 0) }& \textbf{4.0} & \textbf{0.012975 }&  \textbf{1.8447} & \textbf{2.0} &\textbf{0.0041992}\\ \cline{2-7}
                                & (2, 0, 0) & 14.0 & 0.024275 & 0.056546 & 9.0 & 0.011785\\ \cline{2-7}
                                & (1, 0, 1) & 8.0 & 0.01835 & 0.11553 &4.0 & 0.025573\\ \cline{2-7}
                                & (2, 0, 1) & 18.0 & 0.027525  & 0.10737 & 13.0 & 0.037722\\ \cline{2-7}
                                & (1, 1, 0) & 12.0 & 0.022474  &0.020826 & 7.0 & 0.014176\\ \cline{2-7}
                                & (1, 1, 1) & 16.0 & 0.025951 & 0.070681 & 11.0 & 0.024704\\\hline
                                \multicolumn{7}{|c|}{\textbf{Recovered Law:} {$\!%
                                \begin{aligned} &-0.05178x_1+0.5933x_2+0.5414x_3+0.59343x_4 = C_1\\
                                & -0.7728x_1+0.2189x_2-0.5538x_3+0.2189x_4 = C_2
                                \end{aligned}
                                                        $} }\\ 
                                                        \multicolumn{7}{|c|}{}\\
                                \multicolumn{7}{|c|}{\textbf{Reduced Law:} {$\!%
                                                         \begin{aligned} &x_1+0.9998x_3-(3.7e-5)x_2 = C_1\\
                                                         &0.9998x_2+0.9996x_3+x_4=C_2
                                                         \end{aligned}
                                                        $} } \\ \hline
\end{tabular}
}%
    \caption{Table of values corresponding to $N=20$ for Example 2 (the left graphs of Figure \ref{fig:twocons_sing})}
    \label{tab:twocons_sing20}
\end{table}
\endgroup

\begingroup
\begin{table}[H]
    \centering
    \renewcommand{\arraystretch}{1.1}
    \footnotesize{
    \begin{tabular}{|c|c|c|c|c|c|c|}\hline
     $\varepsilon_\textbf{x}$ & Model & $\textrm{len}(\Theta)$ & $\sigma_{\textrm{cutoff}}$ & $\delta$& count & $\sum_j\lVert \Gamma*\xi_j\rVert$ \\ \hlinewd{1.5pt}
                        \multirow{12}*{0.0}
                                & \textbf{(1, 0, 0)} & \textbf{4.0 }& \textbf{1.0e-10} & \textbf{3.9512} & \textbf{2.0} & \textbf{3.0341e-14}\\ \cline{2-7}
                                & (2, 0, 0) & 14.0 & 1.0e-10 & 0.025957 & 9.0 & 9.1177e-14 \\ \cline{2-7}
                                & (3, 0, 0) & 34.0 &1.0e-10 & 0.00029318 & 25.0 & 2.3278e-13\\ \cline{2-7}
                                & (1, 0, 1) & 8.0 & 1.0e-10 & 0.0036841 & 2.0 &3.9258e-12\\ \cline{2-7}
                                & (2, 0, 1) & 18.0 &1.0e-10 &  0.00013799 & 9.0 & 8.1403e-12\\ \cline{2-7}
                                & (3, 0, 1) & 38.0 & 1.0e-10 & 3.6522e-6 & 25.0 & 8.3401e-11\\ \cline{2-7}
                                & (1, 1, 0) & 12.0 & 1.0e-10 & 0.000094638 & 4.0 & 5.8228e-12\\ \cline{2-7}
                                & (2, 1, 0) & 22.0 & 1.0e-10 & 2.5575e-6 & 11.0 &1.2546e-11\\ \cline{2-7}
                                & (3, 1, 0) & 42.0 & 1.0e-10 &  2.1804e-8 & 27.0 & 2.0866e-13\\ \cline{2-7}
                                 & (1, 1, 1) & 16.0 & 1.0e-10 &  1.3843e-6 & 4.0 & 2.0776e-11\\ \cline{2-7}
                                & (2, 1, 1) & 26.0 &1.0e-10 &  1.8045e-8 & 11.0 & 2.1566e-11\\ \cline{2-7}
                                & (3, 1, 1) & 46.0 &1.0e-10 & 1.5864e-9 & 28.0 &8.3203e-11 \\ \hline
                                \multicolumn{7}{|c|}{\textbf{Recovered Law:} {$\!%
                                \begin{aligned} &0.3169x_1-0.6324x_2-0.3154x_3-0.6324x_4 = C_1\\
                                & -0.7067x_1-0.0006x_2-0.7074x_3-0.0006x_4 = C_2
                                \end{aligned}
                                                        $} }\\ 
                                                        \multicolumn{7}{|c|}{}\\
                                \multicolumn{7}{|c|}{\textbf{Reduced Law:} {$\!%
                                                         \begin{aligned} &x_1+0.9999x_3 = C_1\\
                                                         &x_2+0.9999x_3+x_4=C_2
                                                         \end{aligned}
                                                        $} } \\  \hhline{|=|=|=|=|=|=|=|}   
                        \multirow{12}*{6.5220e-10}
                                & \textbf{(1, 0, 0)} & \textbf{4.0} & \textbf{0.000015041}  & \textbf{3.9512} & \textbf{2.0} &\textbf{4.7404e-8}\\ \cline{2-7}
                                & (2, 0, 0) & 14.0 & 0.00002814 & 0.025957 & 9.0 & 1.336e-7\\ \cline{2-7}
                                & (3, 0, 0) & 34.0 &0.000043852 & 0.00029305 & 25.0 & 2.5312e-7\\ \cline{2-7}
                                & (1, 0, 1) & 8.0 & 0.000021272 &0.0036841 & 2.0 & 4.4098e-8\\ \cline{2-7}
                                & (2, 0, 1) & 18.0 &  0.000031907  & 0.00013793 & 9.0 &1.1969e-7\\ \cline{2-7}
                                & (3, 0, 1) & 38.0 &0.00004636 &  0.000074965 & 27.0 & 0.000010574 \\ \cline{2-7}
                                & (1, 1, 0) & 12.0 &0.000026052 & 0.000094605 & 4.0 & 5.228e-8 \\ \cline{2-7}
                                & (2, 1, 0) & 22.0 & 0.000035275& 0.00007154 & 13.0 & 0.000015945\\ \cline{2-7}
                                & (3, 1, 0) & 42.0 &0.000048739 &0.00018352 & 32.0 & 7.4805e-6 \\ \cline{2-7}
                                 & (1, 1, 1) & 16.0 &0.000030083  & 0.000077422 & 6.0& 0.000011873 \\ \cline{2-7}
                                & (2, 1, 1) & 26.0 & 0.000038348 & 0.000074075 & 16.0 & 0.000021881\\ \cline{2-7}
                                & (3, 1, 1) & 46.0 & 0.000051007 & 0.000072377 & 35.0 & 0.000015447 \\ \hline
                                \multicolumn{7}{|c|}{\textbf{Recovered Law:} {$\!%
                                \begin{aligned} &0.6582x_1-0.5237x_2+0.1345x_3-0.5237x_4 = C_1\\
                                & 0.4082x_1+0.3545x_2+0.7628x_3+0.3545x_4 = C_2
                                \end{aligned}
                                                        $} }\\ 
                                                        \multicolumn{7}{|c|}{}\\
                                \multicolumn{7}{|c|}{\textbf{Reduced Law:} {$\!%
                                                         \begin{aligned} &x_1+x_3x_4 = C_1\\
                                                         &x_2+x_3+x_4=C_2
                                                         \end{aligned}
                                                        $} } \\  \hhline{|=|=|=|=|=|=|=|} 
                         \multirow{12}*{6.6525e-05}
                                & \textbf{(1, 0, 0)} & \textbf{4.0} & \textbf{0.032836}  & \textbf{3.9484} & \textbf{2.0} &\textbf{0.0047325}\\ \cline{2-7}
                                & (2, 0, 0) & 14.0 & 0.061431 & 0.26587 & 10.0 & 0.0397\\ \cline{2-7}
                                & (3, 0, 0) & 34.0 &0.095733 & 0.096853 & 29.0 & 0.12524\\ \cline{2-7}
                                & (1, 0, 1) & 8.0 & 0.046437 &  0.17064 &4.0 & 0.029846\\ \cline{2-7}
                                & (2, 0, 1) & 18.0 & 0.069656 & 0.32509 & 14.0 & 0.11871\\ \cline{2-7}
                                & (3, 0, 1) & 38.0 & 0.10121 & 0.10387 & 33.0 & 0.1401\\ \cline{2-7}
                                & (1, 1, 0) & 12.0 & 0.056874  &0.13477 & 8.0 & 0.035431\\ \cline{2-7}
                                & (2, 1, 0) & 22.0 & 0.077008  & 0.30556 & 18.0 & 0.068139\\ \cline{2-7}
                                & (3, 1, 0) & 42.0 & 0.1064 & 0.09839 & 37.0 & 0.12802\\ \cline{2-7}
                                 & (1, 1, 1) & 16.0 & 0.065672  & 0.22147 & 12.0 & 0.088162\\ \cline{2-7}
                                & (2, 1, 1) & 26.0 &0.083716  & 0.35581 & 22.0 &0.1323 \\ \cline{2-7}
                                & (3, 1, 1) & 46.0 &0.11135  & 0.10566 & 41.0 & 0.14254 \\ \hline

                                \multicolumn{7}{|c|}{\textbf{Recovered Law:} {$\!%
                                \begin{aligned} &-0.1717x_1-0.5057x_2-0.6775x_3-0.5056x_4 = C_1\\
                                & 0.7552x_1-0.3798x_2+0.3755x_3-0.3798x_4 = C_2
                                \end{aligned}
                                                        $} }\\ 
                                                        \multicolumn{7}{|c|}{}\\
                                \multicolumn{7}{|c|}{\textbf{Reduced Law:} {$\!%
                                                         \begin{aligned} &x_1+x_3-(5.3e-5)x_4 = C_1\\
                                                         &x_2+1.0002x_3+0.9999x_4=C_2
                                                         \end{aligned}
                                                        $} } \\ \hline
\end{tabular}
}%
    \caption{Table of values corresponding to $N=100$ for Example 2 (the right graphs of Figure \ref{fig:twocons_sing})}
    \label{tab:twocons_sing100}
\end{table}
\endgroup

\subsection{Example 3}
Below are the tables corresponding to the figures in Section \ref{sec:examples-chemcial}.

\begingroup
\begin{table}[H]
    \centering
    \renewcommand{\arraystretch}{1.1}
    \small{
    \begin{tabular}{|c|c|c|c|c|c|c|}\hline
     $\varepsilon_\textbf{x}$ & Model & $\textrm{len}(\Theta)$ & $\sigma_{\textrm{cutoff}}$ & $\delta$& count & $\sum_j\lVert \Gamma*\xi_j\rVert$ \\ \hlinewd{1.5pt}
                        \multirow{9}*{0}
                               & (1, 0, 0) & 3.0 & 0.00001  & NaN & 0 & NaN \\ \cline{2-7}
                                & (2, 0, 0) & 9.0 &  0.00001 & 0.000072085 & 3.0 & 0.000011441\\ \cline{2-7}
                                & (3, 0, 0) & 19.0 & 0.00001 & 0.000045224 & 11.0 & 4.4834e-6 \\ \cline{2-7}
                                & \textbf{(1, 0, 1)} & \textbf{6.0} & \textbf{0.00001 } &\textbf{0.0027865} & \textbf{1.0} & \textbf{1.9743e-6 }\\ \cline{2-7}
                                & (2, 0, 1) & 12.0 & 0.00001 & 0.000027509 & 4.0 & 1.9893e-6 \\ \cline{2-7}
                                & (1, 1, 0) & 9.0 & 0.00001  & 0.000017707 & 2.0 & 6.1443e-6 \\ \cline{2-7}
                                & (2, 1, 0) & 15.0 & 0.00001 & 8.0627e-6 & 7.0 & 3.5526e-6 \\ \cline{2-7}
                                & (1, 1, 1) & 12.0 & 0.00001  & 0.000014728 & 4.0 & 9.7154e-6\\ \cline{2-7}
                                & (2, 1, 1) & 18.0 & 0.00001  & 0.000010709 & 9.0 & 1.3221e-6\\ \hline
                                \multicolumn{7}{|c|}{\textbf{Recovered Law:} {$\!%
                                \begin{aligned} 
                                -0.34852x_1&-0.34831x_2+0.0003x_3+0.0001\ln(x_1)\\
                                &-0.00005\ln(x_2)+0.87018\ln(x_3) = C \end{aligned}
                                                        $} }\\ 
                                                        \multicolumn{7}{|c|}{}\\
                                \multicolumn{7}{|c|}{\textbf{Reduced Law:} {$\!%
                                \begin{aligned} 
                                x_1&+0.9994x_2-0.0008x_3-0.0003\ln(x_1)\\
                                &+0.0001\ln(x_2)-2.4968\ln(x_3) = C
                                \end{aligned}
                                                        $} } \\ \hhline{|=|=|=|=|=|=|=|}
                        \multirow{9}*{4.5634e-10}
                                & (1, 0, 0) & 3.0 & 0.00001  & NaN & 0 & NaN  \\ \cline{2-7}
                                & (2, 0, 0) & 9.0 &  0.00001 & 0.000072089 & 3.0 & 0.000011438 \\ \cline{2-7}
                                & (3, 0, 0) & 19.0 & 0.000011554  & 0.000045221 & 11.0 & 4.4825e-6 \\ \cline{2-7}
                                & \textbf{(1, 0, 1)} & \textbf{6.0} & \textbf{0.00001 } &\textbf{0.0027865 }& \textbf{1.0} &\textbf{ 1.9731e-6} \\ \cline{2-7}
                                & (2, 0, 1) & 12.0 & 0.00001  & 0.000027509 & 4.0 & 1.989e-6  \\ \cline{2-7}
                                & (1, 1, 0) & 9.0 &  0.00001  & 0.000017708 & 2.0 & 6.1442e-6\\ \cline{2-7}
                                & (2, 1, 0) & 15.0 & 0.000010266  &8.0607e-6 & 7.0 & 3.5526e-6 \\ \cline{2-7}
                                & (1, 1, 1) & 12.0 & 0.00001 & 0.000014728 & 4.0 & 9.7154e-6\\ \cline{2-7}
                                & (2, 1, 1) & 18.0 & 0.000011246  & 0.000010709 & 9.0 & 1.3218e-6\\ \hline
                                \multicolumn{7}{|c|}{\textbf{Recovered Law:} {$\!%
                                \begin{aligned} 
                                -0.34852x_1&-0.34831x_2+0.0003x_3+0.0001\ln(x_1)\\
                                &-0.00005\ln(x_2)+0.87018\ln(x_3) = C\end{aligned}
                                                        $}}\\
                                                         \multicolumn{7}{|c|}{}\\
                                \multicolumn{7}{|c|}{\textbf{Reduced Law:} {$\!%
                                \begin{aligned} x_1&+0.9994x_2-0.0008x_3-0.0001\ln(x_1)\\
                                &+0.0001\ln(x_2)-2.4968\ln(x_3) = C\end{aligned}
                                                        $}} \\ \hhline{|=|=|=|=|=|=|=|}
                         \multirow{9}*{3.3100e-05}
                                & (1, 0, 0) & 3.0 & 0.0079854  & NaN & 0 &NaN \\ \cline{2-7}
                                & (2, 0, 0) & 9.0 &  0.013831 & 0.014003 & 4.0 & 0.0044729\\ \cline{2-7}
                                & (3, 0, 0) & 19.0 & 0.020096 &  0.042745 & 14.0 & 0.014357 \\ \cline{2-7}
                                & \textbf{(1, 0, 1)} & \textbf{6.0} & \textbf{0.011293} & \textbf{0.020824}& \textbf{1.0} & \textbf{0.00095611} \\ \cline{2-7}
                                & (2, 0, 1) & 12.0 &  0.015971 & 0.058471 & 7.0 & 0.0069299\\ \cline{2-7}
                                & (1, 1, 0) & 9.0 & 0.013831  &0.021211 & 5.0 & 0.015767 \\ \cline{2-7}
                                & (2, 1, 0) & 15.0 & 0.017856 & 0.019962 & 10.0 & 0.0051929 \\ \cline{2-7}
                                & (1, 1, 1) & 12.0 & 0.015971  & 0.035169 & 7.0 & 0.0047944\\ \cline{2-7}
                                & (2, 1, 1) & 18.0 & 0.01956  &0.059301 & 13.0 & 0.0085682\\ \hline
                                \multicolumn{7}{|c|}{\textbf{Recovered Law:} {$\!%
                                \begin{aligned} -0.0378x_1&-0.0630x_2-0.0352x_3-0.0367\ln(x_1)\\
                                &-0.0228\ln(x_2)+0.93182\ln(x_3) = C\end{aligned}
                                                        $} }\\
                                                         \multicolumn{7}{|c|}{}\\
                                \multicolumn{7}{|c|}{\textbf{Reduced Law:} {$\!%
                                \begin{aligned} 
                                x_1&+1.665x_2+9.3197x_3+0.9798\ln(x_1)+0.6024\ln(x_2)\\
                                &-2.4619\ln(x_3) = C\end{aligned}
                                                        $} } \\  \hline
\end{tabular}
}%
    \caption{Table of values corresponding to $N=20$ for Example 3 (the left graphs of Figure \ref{fig:chemical_sing})}
    \label{tab:chemical_sing20}
\end{table}
\endgroup

\begingroup
\begin{table}[H]
    \centering
    \renewcommand{\arraystretch}{1.1}
    \footnotesize{
    \begin{tabular}{|c|c|c|c|c|c|c|}\hline
     $\varepsilon_\textbf{x}$ & Model & $\textrm{len}(\Theta)$ & $\sigma_{\textrm{cutoff}}$ & $\delta$& count & $\sum_j\lVert \Gamma*\xi_j\rVert$ \\ \hlinewd{1.5pt}
                        \multirow{12}*{0.0}
                                & (1, 0, 0) & 3.0 & 1.0e-7 &NaN & 0 & NaN\\ \cline{2-7}
                                & (2, 0, 0) & 9.0 & 1.0e-7 & 6.3238e-7 & 1.0 & 3.0538e-8\\ \cline{2-7}
                                & (3, 0, 0) & 19.0 &1.0e-7 & 4.7212e-8 & 9.0 & 6.0347e-8\\ \cline{2-7}
                                 & \textbf{(1, 0, 1)} & \textbf{6.0} &\textbf{1.0e-7 }& \textbf{0.0055963} & \textbf{1.0} &\textbf{2.1413e-8}\\ \cline{2-7}
                                & (2, 0, 1) & 12.0 & 1.0e-7 & 2.5271e-7 & 3.0 & 7.0863e-8\\ \cline{2-7}
                                & (3, 0, 1) & 22.0 & 1.0e-7  & 7.7252e-8 & 11.0 & 2.883e-8\\ \cline{2-7}
                                 & (1, 1, 0) & 9.0 & 1.0e-7  & 9.3748e-7 &1.0 & 1.9842e-8\\ \cline{2-7}
                                 & (2, 1, 0) & 15.0 & 1.0e-7  & 5.2665e-7 & 6.0 & 8.9328e-8\\ \cline{2-7}
                                & (3, 1, 0) & 25.0 & 1.0e-7  & 9.3917e-8 & 14.0 & 7.7068e-9\\ \cline{2-7}
                                & (1, 1, 1) & 12.0 & 1.0e-7 &1.2008e-6 & 3.0 & 7.9501e-8\\ \cline{2-7}
                                & (2, 1, 1) & 18.0 &  1.0e-7 &2.2626e-7 & 8.0 & 8.9132e-8\\ \cline{2-7}
                                & (3, 1, 1) & 28.0 &1.0e-7 & 7.5189e-8 & 17.0 & 5.3318e-8 \\ \hline
                                \multicolumn{7}{|c|}{\textbf{Recovered Law:} $-0.34815x_1-0.34816x_2+0.87038\ln(x_3) = C$}\\ 
                                \multicolumn{7}{|c|}{\textbf{Reduced Law:} $x_1+x_2-2.5\ln(x_3) = C$} \\ \hhline{|=|=|=|=|=|=|=|}   
                        \multirow{12}*{6.0004e-10}
                                & (1, 0, 0) & 3.0 &  0.000012322& NaN & 0 & NaN\\ \cline{2-7}
                                & (2, 0, 0) & 9.0 &  0.000021342 &  0.00018738 & 3.0 & 4.358e-6\\ \cline{2-7}
                                & (3, 0, 0) & 19.0 &0.00003101 & 0.00015757 & 12.0 & 7.9755e-6\\ \cline{2-7}
                                 & \textbf{(1, 0, 1)} & \textbf{6.0} &\textbf{0.000017426 }& \textbf{0.0055963} & \textbf{1.0} & \textbf{1.5939e-8}\\ \cline{2-7}
                                & (2, 0, 1) & 12.0 &0.000024644  & 0.00012653 & 5.0 &7.567e-6\\ \cline{2-7}
                                & (3, 0, 1) & 22.0 & 0.000033368 & 0.00012176 & 14.0 & 7.1383e-6 \\ \cline{2-7}
                                 & (1, 1, 0) & 9.0 & 0.000021342 & 0.000033311 & 2.0 & 9.7926e-7\\ \cline{2-7}
                                 & (2, 1, 0) & 15.0 & 0.000027553 & 0.000057519 & 8.0 & 6.3311e-6\\ \cline{2-7}
                                & (3, 1, 0) & 25.0 & 0.00003557  & 0.00016553 &18.0 & 0.000010562\\ \cline{2-7}
                                & (1, 1, 1) & 12.0 & 0.000024644 & 0.000024303 & 4.0 & 1.3299e-6\\ \cline{2-7}
                                & (2, 1, 1) & 18.0 & 0.000030182  & 0.0001407 & 11.0 & 0.000032848\\ \cline{2-7}
                                & (3, 1, 1) & 28.0 &0.000037644  & 0.00013138 & 20.0 & 7.676e-6  \\ \hline

                                \multicolumn{7}{|c|}{\textbf{Recovered Law:} $-0.34816x_1-0.34816x_2+0.87039\ln(x_3) = C$}\\ 
                                \multicolumn{7}{|c|}{\textbf{Reduced Law:} $x_1+x_2-2.5\ln(x_3) = C$} \\ \hhline{|=|=|=|=|=|=|=|} 
                         \multirow{12}*{6.3398e-05}
                                & (1, 0, 0) & 3.0 &0.027539& NaN & 0 & NaN\\ \cline{2-7}
                                & (2, 0, 0) & 9.0 & 0.047698  & 0.090692 & 5.0 & 0.0044301\\ \cline{2-7}
                                & (3, 0, 0) & 19.0 &0.069304  & 0.25342 & 15.0 & 0.028176\\ \cline{2-7}
                                 & \textbf{(1, 0, 1)} &\textbf{ 6.0} &\textbf{0.038946} & \textbf{0.083778} &\textbf{2.0} & \textbf{0.0055469}\\ \cline{2-7}
                                & (2, 0, 1) & 12.0 &0.055077  & 0.23125 & 8.0 & 0.020959\\ \cline{2-7}
                                & (3, 0, 1) & 22.0 &0.074575 & 0.47261 & 18.0 & 0.048114 \\ \cline{2-7}
                                 & (1, 1, 0) & 9.0 &0.047698 &0.065736 & 5.0 & 0.0038466 \\ \cline{2-7}
                                 & (2, 1, 0) & 15.0 & 0.061578  & 0.12528 & 11.0 & 0.0071169\\ \cline{2-7}
                                & (3, 1, 0) & 25.0 & 0.079497 & 0.28407 & 21.0 & 0.02934\\ \cline{2-7}
                                & (1, 1, 1) & 12.0 & 0.055077 & 0.13098 & 8.0 & 0.018194\\ \cline{2-7}
                                & (2, 1, 1) & 18.0 & 0.067456  & 0.24255& 14.0 & 0.025508\\ \cline{2-7}
                                & (3, 1, 1) & 28.0 &0.084132  & 0.47579 & 24.0 & 0.050587 \\ \hline
                            \multicolumn{7}{|c|}{\textbf{Recovered Law:} {$\!%
                                \begin{aligned} &-0.624x_1-0.277x_2+0.562x_3 +0.187\ln(x_1)\\
                                &\hspace{1em}- 0.088\ln(x_2)-0.415\ln(x_3)= C_1\\
                                &-0.379x_1-0.360x_2+0.026x_3 +0.009\ln(x_1)\\
                                &\hspace{1em}- 0.005\ln(x_2)+0.851\ln(x_3)= C_1
                                \end{aligned}
                                                        $} }\\ 
                                                        \multicolumn{7}{|c|}{}\\ 
                                \multicolumn{7}{|c|}{\textbf{Reduced Law:} {$\!%
                                \begin{aligned} &x_1-1.63x_3 -0.054\ln(x_1)\\
                                &\hspace{1em}+ 0.253\ln(x_2)+3.223\ln(x_3)= C_1\\
                                &x_2+1.642x_3 +0.542\ln(x_1)\\
                                &\hspace{1em}- 0.252\ln(x_2)-5.752\ln(x_3)= C_1
                                \end{aligned}
                                                        $} }\\\hline
\end{tabular}
}%
    \caption{Table of values corresponding to $N=100$ for Example 3 (the right graphs of Figure \ref{fig:chemical_sing})}
    \label{tab:chemical_sing100}
\end{table}
\endgroup

\subsection{Example 4}
Below are the tables corresponding to the figures in Section \ref{sec:examples-nocons}. 

\begingroup
\begin{table}[H]
    \centering
    \renewcommand{\arraystretch}{1.1}
    \footnotesize{
    \begin{tabular}{|c|c|c|c|c|c|c|}\hline
     $\varepsilon_\textbf{x}$ & Model & $\textrm{len}(\Theta)$ & $\sigma_{\textrm{cutoff}}$ & $\delta$& count & $\sum_j\lVert \Gamma*\xi_j\rVert$ \\ \hlinewd{1.5pt}
                        \multirow{12}*{0}
                                & (1, 0, 0) & 2.0 &1e-10 & NaN & 0 & NaN\\\cline{2-7}
                                & (2, 0, 0) & 5.0 &1e-10 & NaN & 0 & NaN\\ \cline{2-7}
                                & (3, 0, 0) & 9.0 &1e-10 & NaN & 0 & NaN\\ \cline{2-7}
                                & (1, 0, 1) & 4.0 &1e-10 & NaN & 0 & NaN\\ \cline{2-7}
                                & (2, 0, 1) & 7.0 &1e-10 & NaN & 0 & NaN\\\cline{2-7}
                                 & (3, 0, 1) & 11.0 &1e-10 & NaN & 0 & NaN\\\cline{2-7}
                                 & (1, 1, 0) & 6.0 &1e-10 & NaN & 0 & NaN\\\cline{2-7}
                                 & (2, 1, 0) & 9.0 &1e-10 & NaN & 0 & NaN\\ \cline{2-7}
                                & (3, 1, 0) & 13.0 &1e-10 & NaN & 0 & NaN\\\cline{2-7}
                                 & (1, 1, 1) & 8.0 &1e-10 & NaN & 0 & NaN\\ \cline{2-7}
                                & (2, 1, 1) & 11.0 &1e-10 & NaN & 0 & NaN\\ \cline{2-7}
                                & (3, 1, 1) & 15.0 &1e-10 & NaN & 0 & NaN\\ \hline
                                \multicolumn{6}{|c|}{\textbf{Recovered Law:} None}\\  
                                \hhline{|=|=|=|=|=|=|=|}
                        \multirow{12}*{4.2888e-10}
                                & (1, 0, 0) & 2.0 &3.5969e-6 & NaN & 0 & NaN\\  \cline{2-7}
                                & (2, 0, 0) & 5.0 &5.6871e-6 & NaN & 0 & NaN\\  \cline{2-7}
                                & (3, 0, 0) & 9.0 & 7.6301e-6& NaN & 0 & NaN\\  \cline{2-7}
                                & (1, 0, 1) & 4.0 &5.0867e-6  & NaN & 0 & NaN\\  \cline{2-7}
                                & (2, 0, 1) & 7.0 & 6.7291e-6& NaN & 0 & NaN\\  \cline{2-7}
                                & (3, 0, 1) & 11.0 & 8.4354e-6  & 0.000037867 & 1.0 & 4.5746e-6\\ \cline{2-7}
                                 & (1, 1, 0) & 6.0 &6.2299e-6 & NaN & 0 & NaN\\  \cline{2-7}
                                & (2, 1, 0) & 9.0 &7.6301e-6  & NaN & 0 & NaN\\  \cline{2-7}
                                & (3, 1, 0) & 13.0 &9.1702e-6 &  0.000010112 & 2.0& 1.1658e-6\\  \cline{2-7}
                                & (1, 1, 1) & 8.0 & 7.1937e-6& NaN & 0 & NaN\\  \cline{2-7}
                                & \textbf{(2, 1, 1)} &\textbf{ 11.0} & \textbf{8.4354e-6}  & \textbf{0.000089774 }& \textbf{2.0} & \textbf{0.000010243}\\  \cline{2-7}
                                & (3, 1, 1) & 15.0 &9.8504e-6  & 0.000040058 & 4.0 & 2.5907e-6\\ \hline
                                \multicolumn{7}{|c|}{\textbf{Recovered Law:} {$\!%
                                \begin{aligned} &-0.2423x_1+0.7977x_2   -0.0143x_1^2   -0.0055x_1x_2   -0.1071x_2^2   -0.0778\sin(x_1)\\   &-0.0386\sin(x_2)   -0.1794\cos(x_1)   +0.1763\cos(x_2)  +0.1742\ln(x_1)   -0.4382\ln(x_2) =C_1\\
                                & 0.7738x_1+ 0.1106x_2   -0.2916x_1^2   -0.0009x_1x_2   -0.0145x_2^2   -0.5181\sin(x_1)  \\ 
                                &-0.0040\sin(x_2)  -0.1684\cos(x_1)  + 0.0244\cos(x_2) -0.0504\ln(x_1)   -0.0624\ln(x_2)= C_2
                               \end{aligned}
                                                        $} }\\ 
                                \hhline{|=|=|=|=|=|=|=|}
                         \multirow{12}*{4.1076e-05}
                                  & (1, 0, 0) & 2.0 &0.0075294 & NaN & 0 & NaN\\   \cline{2-7}
                                & (2, 0, 0) & 5.0 &0.011905 & NaN & 0 & NaN\\   \cline{2-7}
                                & (3, 0, 0) & 9.0 & 0.015972  & 0.034424 & 3.0 & 0.0063841\\   \cline{2-7}
                                & (1, 0, 1) & 4.0 & 0.010648& NaN & 0 & NaN\\   \cline{2-7}
                                & \textbf{(2, 0, 1)} & \textbf{7.0} & \textbf{0.014086}  &\textbf{ 0.042318} & \textbf{2.0} & \textbf{0.0062048}\\   \cline{2-7}
                                & (3, 0, 1) & 11.0 & 0.017658 & 0.032374 & 5.0 & 0.013121\\   \cline{2-7}
                                & (1, 1, 0) & 6.0 & 0.013041 & 0.019994 & 1.0 & 0.0025748\\   \cline{2-7}
                                & (2, 1, 0) & 9.0 & 0.015972  & 0.045666 & 4.0 &0.013106\\   \cline{2-7}
                                & (3, 1, 0) & 13.0 & 0.019196  & 0.035502 & 7.0 & 0.0088183\\   \cline{2-7}
                                & (1, 1, 1) & 8.0 &  0.015059 & 0.02468 & 3.0 & 0.0078267\\   \cline{2-7}
                                & (2, 1, 1) & 11.0 & 0.017658 & 0.054078 & 6.0 & 0.018811\\   \cline{2-7}
                                & (3, 1, 1) & 15.0 &  0.02062  & 0.033739 & 9.0 & 0.014411\\ \hline
                                 \multicolumn{7}{|c|}{\textbf{Recovered Law:} {$\!%
                                \begin{aligned} 
                                &0.6572x_1 +   0.5337x_2   -0.1609x_1^2   -0.0370x_1x_2 \\
                                &  \hspace{1em} -0.0706x_2^2   -0.2599\ln(x_1)   -0.4283\ln(x_2) =C_1\\
                                &-0.5800x_1+    0.5550x_2  + 0.1508 x_1^2  -0.0342x_1x_2   \\
                                &\hspace{1em} -0.0735x_2^2   + 0.3492\ln(x_1)   -0.4520\ln(x_2) = C_2
                                \end{aligned}
                                                        $} }\\ \hline
\end{tabular}
}%
    \caption{Table of values corresponding to $N=20$ for Example 4 (the left graphs of Figure \ref{fig:noncons_sing})}
    \label{tab:nocons_sing20}
\end{table}
\endgroup

\begingroup
\begin{table}[H]
    \centering
    \renewcommand{\arraystretch}{1.1}
      \small{
    \begin{tabular}{|c|c|c|c|c|c|c|}\hline
     $\varepsilon_\textbf{x}$ & Model & $\textrm{len}(\Theta)$ & $\sigma_{\textrm{cutoff}}$ & $\delta$& count & $\sum_j\lVert \Gamma*\xi_j\rVert$ \\ \hlinewd{1.5pt}
                        \multirow{12}*{0}
                                & (1, 0, 0) & 2.0 &1e-10 & NaN & 0 & NaN\\\cline{2-7}
                                & (2, 0, 0) & 5.0 &1e-10 & NaN & 0 & NaN\\ \cline{2-7}
                                & (3, 0, 0) & 9.0 &1e-10 & NaN & 0 & NaN\\ \cline{2-7}
                                & (1, 0, 1) & 4.0 &1e-10 & NaN & 0 & NaN\\ \cline{2-7}
                                & (2, 0, 1) & 7.0 &1e-10 & NaN & 0 & NaN\\\cline{2-7}
                                 & (3, 0, 1) & 11.0 &1e-10 & NaN & 0 & NaN\\\cline{2-7}
                                 & (1, 1, 0) & 6.0 &1e-10 & NaN & 0 & NaN\\\cline{2-7}
                                 & (2, 1, 0) & 9.0 &1e-10 & NaN & 0 & NaN\\ \cline{2-7}
                                & (3, 1, 0) & 13.0 &1e-10 & NaN & 0 & NaN\\\cline{2-7}
                                 & (1, 1, 1) & 8.0 &1e-10 & NaN & 0 & NaN\\ \cline{2-7}
                                & (2, 1, 1) & 11.0 &1e-10 & NaN & 0 & NaN\\ \cline{2-7}
                                & (3, 1, 1) & 15.0 &1e-10 & NaN & 0 & NaN\\ \hline
                                \multicolumn{7}{|c|}{\textbf{Recovered Law:} None}\\ 
                                \hhline{|=|=|=|=|=|=|=|}
                        \multirow{12}*{3.6436e-10}
                                & (1, 0, 0) & 2.0 &7.2145e-6 & NaN & 0 & NaN\\  \cline{2-7}
                                & (2, 0, 0) & 5.0 &0.000011407 & NaN & 0 & NaN\\  \cline{2-7}
                                & (3, 0, 0) & 9.0 & 0.000015304& NaN & 0 & NaN\\  \cline{2-7}
                                & (1, 0, 1) & 4.0 &0.000010203 & NaN & 0 & NaN\\  \cline{2-7}
                                & (2, 0, 1) & 7.0 & 6.7291e-6& NaN & 0 & NaN\\  \cline{2-7}
                                & \textbf{(3, 0, 1)} & \textbf{11.0 }& \textbf{0.000016919} &\textbf{ 0.000096483} & \textbf{1.0} & \textbf{5.2896e-6}\\ \cline{2-7}
                                 & (1, 1, 0) & 6.0 & 0.000012496  & NaN & 0 & NaN\\  \cline{2-7}
                                & (2, 1, 0) & 9.0 &0.000015304  & NaN & 0 & NaN\\  \cline{2-7}
                                & (3, 1, 0) & 13.0 &0.000018393 & 0.00004666 &3.0 & 9.4759e-6\\  \cline{2-7}
                                & (1, 1, 1) & 8.0 & 0.000014429& NaN & 0 & NaN\\  \cline{2-7}
                                & (2, 1, 1) & 11.0 &0.000016919 & 0.000031836 & 2.0 & 0.000011024\\  \cline{2-7}
                                & (3, 1, 1) & 15.0 &0.000019758 & 0.000139 & 5.0 & 0.000019939\\ \hline
                                \multicolumn{7}{|c|}{\textbf{Recovered Law:}
                                {$\!%
                                \begin{aligned} &0.2608x_1   -0.8158x_2   -0.1429x_1^2  +0.0951x_1x_2  +  0.2414x_2^2+0.0281x_1^3   -0.0074x_1^2x_2\\
                                &\hspace{1em}  -0.0235x-1x_2^2 -0.0287x_2^2   -0.1261\ln(x_1) + 0.4008\ln(x_2) = C
                               \end{aligned}
                                                        $} }\\ 
                                \hhline{|=|=|=|=|=|=|=|}
                         \multirow{12}*{3.7872e-05}
                                  & (1, 0, 0) & 2.0 &0.015949 & NaN & 0 & NaN\\   \cline{2-7}
                                & \textbf{(2, 0, 0)} & \textbf{5.0} &\textbf{0.025217}  & \textbf{0.24234} & \textbf{1.0} & \textbf{0.013766}\\   \cline{2-7}
                                & (3, 0, 0) & 9.0 & 0.033832 & 0.02819 & 3.0 & 0.010148\\   \cline{2-7}
                                & (1, 0, 1) & 4.0 & 0.022555 & 0.17651 &1.0 & 0.019882\\   \cline{2-7}
                                & (2, 0, 1) & 7.0 &  0.029837  & 0.46712 & 3.0 & 0.033594\\   \cline{2-7}
                                & (3, 0, 1) & 11.0 &0.037403  & 0.019872 & 5.0 & 0.02294 \\   \cline{2-7}
                                & (1, 1, 0) & 6.0 & 0.027624 &0.23913 & 2.0 & 0.027542\\   \cline{2-7}
                                & (2, 1, 0) & 9.0 &0.033832 & 0.41062 & 5.0 & 0.048114 \\   \cline{2-7}
                                & (3, 1, 0) & 13.0 &0.040661 & 0.031443 & 7.0 & 0.014386\\   \cline{2-7}
                                & (1, 1, 1) & 8.0 &  0.031897&  0.024736 & 3.0 & 0.0092047 \\   \cline{2-7}
                                & (2, 1, 1) & 11.0 & 0.037403 & 0.5477 & 7.0 &0.062978\\   \cline{2-7}
                                & (3, 1, 1) & 15.0 & 0.043677  & 0.024525 & 9.0 & 0.024955\\ \hline
                                 \multicolumn{7}{|c|}{\textbf{Recovered Law:} $-0.5016x_1 +  0.8013x_2 +  0.2218x_1^2   -0.1194x_1x_2   -0.2070x_2^2 = C$}\\ \hline
\end{tabular}
}%
    \caption{Table of values corresponding to $N=100$ for Example 4 (the right graphs of Figure \ref{fig:noncons_sing}}
    \label{tab:nocons_sing100}
\end{table}
\endgroup

\newpage
\subsection{MAPK Pathway}
The dynamical system for the MAPK pathway as presented in \cite{Kholodenko10} is below.
\begin{equation*}
    \begin{aligned}
    &Raf\rightleftarrows pRaf & \hspace{2em} &MEK\rightleftarrows pMEK & \hspace{2em}  &ERK\rightleftarrows pERK\\    
    &pRaf \rightleftarrows ppRaf & \hspace{2em}  &pMEK\rightleftarrows ppMEK & \hspace{2em}  &pERK\rightleftarrows ppERK
    \end{aligned}
\end{equation*} 

\footnotesize{
\begin{align*}
    \dfrac{d[Raf]}{dt} &= \dfrac{V^{4}_{\textrm{max}}[pRaf]/K_{m_4}}{1+[ppRaf]/K_{m_3}+[pRaf]/K_{m_4}}-\dfrac{k_1^{\textrm{cat}}[RasGTP][Raf]/K_{m_1}}{1+[Raf]/K_{m_1}+[pRaf]/K_{m_2}}\cdot \dfrac{1+F[ppERK]/K_f}{1+[ppERK]/K_f}\\[0.5\baselineskip]
    \dfrac{d[pRaf]}{dt} &= \dfrac{k_1^{\textrm{cat}}[RasGTP][Raf]/K_{m_1}}{1+[Raf]/K_{m_1}+[pRaf]/K_{m_2}}\cdot \dfrac{1+F[ppERK]/K_f}{1+[ppERK]/K_f}
                            +\dfrac{V^{3}_{\textrm{max}}[ppRaf]/K_{m_3}}{1+[ppRaf]/K_{m_3}+[pRaf]/K_{m_4}}\\ 
                         &-\dfrac{k_2^{\textrm{cat}}[RasGTP][pRaf]/K_{m_2}}{1+[Raf]/K_{m_1}+[pRaf]/K_{m_2}}\cdot \dfrac{1+F[ppERK]/K_f}{1+[ppERK]/K_f}
                        -\dfrac{V^{4}_{\textrm{max}}[pRaf]/K_{m_4}}{1+[ppRaf]/K_{m_3}+[pRaf]/K_{m_4}}\\[0.5\baselineskip]
    \dfrac{d[ppRaf]}{dt} &= \dfrac{k_2^{\textrm{cat}}[RasGTP][pRaf]/K_{m_2}}{1+[Raf]/K_{m_1}+[pRaf]/K_{m_2}}\cdot \dfrac{1+F[ppERK]/K_f}{1+[ppERK]/K_f}
                            -\dfrac{V^{3}_{\textrm{max}}[ppRaf]/K_{m_3}}{1+[ppRaf]/K_{m_3}+[pRaf]/K_{m_4}}\\[0.5\baselineskip]
    \dfrac{d[MEK]}{dt} &= \dfrac{V^{8}_{\textrm{max}}[pMEK]/K_{m_8}}{1+[ppMEK]/K_{m_7}+[pMEK]/K_{m_8}}
                            -\dfrac{k_5^{\textrm{cat}}[ppRaf][MEK]/K_{m_5}}{1+[MEK]/K_{m_5}+[pMEK]/K_{m_6}}\\[0.5\baselineskip]
    \dfrac{d[pMEK]}{dt} &= \dfrac{k_5^{\textrm{cat}}[ppRaf][MEK]/K_{m_5}}{1+[MEK]/K_{m_5}+[pMEK]/K_{m_6}}
                            -\dfrac{k_6^{\textrm{cat}}[ppRaf][pMEK]/K_{m_6}}{1+[MEK]/K_{m_5}+[pMEK]/K_{m_6}}\\
                            &+\dfrac{V^{7}_{\textrm{max}}[ppMEK]/K_{m_7}}{1+[ppMEK]/K_{m_7}+[pMEK]/K_{m_8}}
                            -\dfrac{V^{8}_{\textrm{max}}[pMEK]/K_{m_8}}{1+[ppMEK]/K_{m_7}+[pMEK]/K_{m_8}} \\[0.5\baselineskip]
    \dfrac{d[ppMEK]}{dt} &= \dfrac{k_6^{\textrm{cat}}[ppRaf][pMEK]/K_{m_6}}{1+[MEK]/K_{m_5}+[pMEK]/K_{m_6}}
                            -\dfrac{V^{7}_{\textrm{max}}[ppMEK]/K_{m_7}}{1+[ppMEK]/K_{m_7}+[pMEK]/K_{m_8}}\\[0.5\baselineskip]
    \dfrac{d[ERK]}{dt} &= \dfrac{V_{12}^{\textrm{max}}[pERK/K_{m_{12}}]}{1+[ppERK]/K_{m_11}+[pERK]/K_{m_{12}}+[ERK]/K_{m_{13}}}
                        -\dfrac{k_9^{\textrm{cat}}[ppMEK][ERK]/K_{m_9}}{1+[ERK]/K_{m_9}+[pERK]/K_{m_{10}}}\\[0.5\baselineskip]
    \dfrac{d[pERK]}{dt} &= \dfrac{k_9^{\textrm{cat}}[ppMEK][ERK]/K_{m_9}}{1+[ERK]/K_{m_9}+[pERK]/K_{m_{10}}}
                            -\dfrac{k_{10}^{\textrm{cat}}[ppMEK][pERK]/K_{m_{10}}}{1+[ERK]/K_{m_9}+[pERK]/K_{m_{10}}}\\
                            &+\dfrac{V_{11}^{\textrm{max}}[ppERK/K_{m_{11}}]}{1+[ppERK]/K_{m_11}+[pERK]/K_{m_{12}}+[ERK]/K_{m_{13}}} 
                            -\dfrac{V_{12}^{\textrm{max}}[pERK/K_{m_{12}}]}{1+[ppERK]/K_{m_11}+[pERK]/K_{m_{12}}+[ERK]/K_{m_{13}}}\\[0.5\baselineskip]
    \dfrac{d[ppERK]}{dt} &= \dfrac{k_{10}^{\textrm{cat}}[ppMEK][pERK]/K_{m_{10}}}{1+[ERK]/K_{m_9}+[pERK]/K_{m_{10}}}
                            -\dfrac{V_{11}^{\textrm{max}}[ppERK/K_{m_{11}}]}{1+[ppERK]/K_{m_11}+[pERK]/K_{m_{12}}+[ERK]/K_{m_{13}}}
\end{align*}
}%
The following coefficients were used to generate data:
\bit
\item $V_3=2.5$, $V_4 = 3.75$, $V_7 = 3$, $V_8 = 3.75$, $V_{11}=3.75$, $V_{12}=5$ 
\item $K_{m_{1}} =100$, $K_{m_{2}}=200$, $K_{m_{3}}=50$, $K_{m_{4}}=100$, $K_{m_{5}}=250$, $K_{m_{6}}=250$, $K_{m_{7}}=80$, $K_{m_{8}}=250$, $K_{m_{9}}=250$, $K_{m_{10}}=120$, $K_{m_{11}}=20$, $K_{m_{12}}=300$
\item  $k_1=1$, $k_20.25$, $k_5=2.5$, $k_6=0.5$, $k_9=0.125$, $k_{10}=0.125$ 
\item $[RasGTP] = I = 10$
\item Negative feedback: $F= 0.17$, $K_f=15$
\item Initial values: $[Raf]_0=298$, $[pRaf]_0=1$, $[ppRaf]_0=1$, $[MEK]_0=298$, $[pMEK]_0=1$, $[ppMEK]_0=1$, $[ERK]_0=298$, $[pERK]_0=1$, $[ppERK]_0=1$
\eit

Below are the tables corresponding to the figures in Section \ref{sec:examples-mapk}.

\begingroup
\begin{table}[H]
    \centering
    \renewcommand{\arraystretch}{1.1}
    \small{
    \begin{tabular}{|c|c|c|c|c|c|c|}\hline
     $\varepsilon_\textbf{x}$ & Model & $\textrm{len}(\Theta)$ & $\sigma_{\textrm{cutoff}}$ & $\delta$& count & $\sum_j\lVert \Gamma*\xi_j\rVert$ \\ \hlinewd{1.5pt}
                        \multirow{2}*{0}
                                & \textbf{(1, 0, 0)} &\textbf{ 9.0} & \textbf{1.0e-10} & \textbf{0.039315 }& \textbf{3.0} & \textbf{7.8277e-14}\\  \cline{2-7}
                                & (1, 0, 1) & 18.0 & 1.0e-10  & 0.0008595 &3.0 & 1.1575e-13\\  \hline
                                \multicolumn{7}{|c|}{\textbf{Recovered Law:} {$\!%
                                \begin{aligned} 
                                &0.0730\sum_{i=1}^{3}x_i+0.0983\sum_{i=4}^{6}x_i+0.5642\sum_{i=7}^{9}x_i= C_1\\
                                & -0.0357\sum_{i=1}^{3}x_i-0.5668\sum_{i=4}^{6}x_i+0.1034\sum_{i=7}^{9}x_i= C_2\\
                                &0.5716\sum_{i=1}^{3}x_i-0.0480\sum_{i=4}^{6}x_i-0.0655\sum_{i=7}^{9}x_i= C_3
                                \end{aligned}
                                                        $} }\\ 
                                                        \multicolumn{7}{|c|}{}\\
                                \multicolumn{7}{|c|}{\textbf{Reduced Law:} {$\!%
                                                         \begin{aligned} &x_1+x_2+x_3 = C_1\\
                                                         &x_4+x_5+x_6=C_2\\
                                                          &x_7+x_8+x_9=C_3
                                                         \end{aligned}
                                                        $} } \\ \hhline{|=|=|=|=|=|=|=|}
                        \multirow{2}*{8.8961e-10}
                                & \textbf{(1, 0, 0)} &\textbf{ 9.0} & \textbf{0.00001241} & \textbf{0.039315 }& \textbf{3.0} &\textbf{1.0353e-10}\\ \cline{2-7}
                                & (1, 0, 1) & 18.0 & 0.00001755 & 0.0008595 & 3.0 & 2.4288e-11\\ \hline
                                \multicolumn{7}{|c|}{\textbf{Recovered Law:} {$\!%
                                \begin{aligned} &0.2457\sum_{i=1}^{3}x_i+0.5222\sum_{i=4}^{6}x_i-0.0158\sum_{i=7}^{9}x_i= C_1\\
                                & 0.5017\sum_{i=1}^{3}x_i-0.2310\sum_{i=4}^{6}x_i+0.1679\sum_{i=7}^{9}x_i= C_2\\
                                &-0.1455\sum_{i=1}^{3}x_i+0.0851\sum_{i=4}^{6}x_i+0.5521\sum_{i=7}^{9}x_i= C_3
                                \end{aligned}
                                                        $} }\\ 
                                                        \multicolumn{7}{|c|}{}\\
                                \multicolumn{7}{|c|}{\textbf{Reduced Law:} {$\!%
                                                         \begin{aligned} &x_1+x_2+x_3 = C_1\\
                                                         &x_4+x_5+x_6=C_2\\
                                                          &x_7+x_8+x_9=C_3
                                                         \end{aligned}
                                                        $} } \\ \hhline{|=|=|=|=|=|=|=|}
                         \multirow{2}*{1.0553e-04}
                                 & \textbf{(1, 0, 0)} & \textbf{9.0 }&\textbf{ 0.029962} & \textbf{0.03931} & \textbf{3.0} &\textbf{0.000011135}\\ \cline{2-7}
                                & (1, 0, 1) & 18.0 & 0.042372 & 0.016559 & 8.0 & 0.046897\\\hline
                                \multicolumn{7}{|c|}{\textbf{Recovered Law:} {$\!%
                                \begin{aligned} &0.5632\sum_{i=1}^{3}x_i+0.0379\sum_{i=4}^{6}x_i+0.1209\sum_{i=7}^{9}x_i= C_1\\
                                & -0.0237\sum_{i=1}^{3}x_i+0.5727\sum_{i=4}^{6}x_i-0.0689\sum_{i=7}^{9}x_i= C_2\\
                                &0.1244\sum_{i=1}^{3}x_i-0.0622\sum_{i=4}^{6}x_i-0.5603\sum_{i=7}^{9}x_i= C_3
                                \end{aligned}
                                                        $} }\\ 
                                                        \multicolumn{7}{|c|}{}\\
                                \multicolumn{7}{|c|}{\textbf{Reduced Law:} {$\!%
                                                         \begin{aligned} &x_1+x_2+x_3+10^{-5}\mathcal{O}(\textbf{x}) = C_1\\
                                                         &x_4+x_5+x_6+10^{-5}\mathcal{O}(\textbf{x})=C_2\\
                                                          &x_7+x_8+x_9+10^{-5}\mathcal{O}(\textbf{x})=C_3
                                                         \end{aligned}
                                                        $} } \\ \hline
\end{tabular}
}%
    \caption{Table of values corresponding to $N=20$ for MAPK example (the left graphs of Figure \ref{fig:kholodenko_sing})}
    \label{tab:kholodenko_sing20}
\end{table}
\endgroup

\begingroup
\begin{table}[H]
    \centering
    \renewcommand{\arraystretch}{1.05}
    \scriptsize{
    \begin{tabular}{|c|c|c|c|c|c|c|}\hline
     $\varepsilon_\textbf{x}$ & Model & $\textrm{len}(\Theta)$ & $\sigma_{\textrm{cutoff}}$ & $\delta$& count & $\sum_j\lVert \Gamma*\xi_j\rVert$ \\ \hlinewd{1.5pt}
                        \multirow{8}*{0.0}
                                 & \textbf{(1, 0, 0)} &\textbf{ 9.0} &\textbf{1.0e-10} & \textbf{0.40612} & \textbf{3.0 }& \textbf{5.8377e-13} \\  \cline{2-7}
                                & (2, 0, 0) & 54.0 &1.0e-10  & 9.8839e-11 & 26.0 & 1.4308e-10\\  \cline{2-7}
                                & (1, 0, 1) & 18.0 & 1.0e-10  & 0.0054736 &3.0 & 4.5144e-12\\  \cline{2-7}
                                 & (2, 0, 1) & 63.0 & 1.0e-10  & 9.7605e-11 & 26.0 & 1.3207e-10\\  \cline{2-7}   
                                 & (1, 1, 0) & 27.0 &1.0e-10& 0.20173 & 3.0 & 6.8272e-13 \\  \cline{2-7}
                                & (2, 1, 0) & 72.0 & 1.0e-10 & 8.9034e-11 & 26.0 &1.6291e-10\\  \cline{2-7}
                                & (1, 1, 1) & 36.0 & 1.0e-10 & 0.0011957 & 3.0 & 6.3639e-12\\  \cline{2-7}
                                & (2, 1, 1) & 81.0 & 1.0e-10 &8.6342e-11 & 26.0 & 1.5094e-10 \\ \hline
                               \multicolumn{7}{|c|}{\textbf{Recovered Law:} {$\!%
                                \begin{aligned} &-0.0120\sum_{i=1}^{3}x_i+0.0010\sum_{i=4}^{6}x_i+0.5772\sum_{i=7}^{9}x_i= C_1\\
                                & 0.1357\sum_{i=1}^{3}x_i-0.5611\sum_{i=4}^{6}x_i+0.0038\sum_{i=7}^{9}x_i= C_2\\
                                &0.5610\sum_{i=1}^{3}x_i+0.1357\sum_{i=4}^{6}x_i+0.0114\sum_{i=7}^{9}x_i= C_3
                                \end{aligned}
                                                        $} }\\ 
                                                        \multicolumn{7}{|c|}{}\\
                                \multicolumn{7}{|c|}{\textbf{Reduced Law:} {$\!%
                                                         \begin{aligned} &x_1+x_2+x_3 = C_1\\
                                                         &x_4+x_5+x_6=C_2\\
                                                          &x_7+x_8+x_9=C_3
                                                         \end{aligned}
                                                        $} } \\  \hhline{|=|=|=|=|=|=|=|}   
                        \multirow{8}*{1.4759e-09}
                                 & \textbf{(1, 0, 0) }& \textbf{9.0} & \textbf{0.000038889 } &\textbf{ 0.40612 }& \textbf{3.0} &\textbf{2.2547e-9}\\  \cline{2-7}
                                & (2, 0, 0) & 54.0 &0.000095258  & 0.11421 & 27.0 & 8.5191e-7\\  \cline{2-7}
                                & (1, 0, 1) & 18.0 &0.000054997  &0.0054736 & 3.0 & 2.0546e-9 \\  \cline{2-7}
                                 & (2, 0, 1) & 63.0 & 0.00010289  & 0.000073636 & 30.0 &0.00005838\\  \cline{2-7}   
                                 & (1, 1, 0) & 27.0 & 0.000067358  & 0.20173 & 3.0 & 2.0246e-9\\  \cline{2-7}
                                & (2, 1, 0) & 72.0 & 0.00010999  & 0.042939 & 27.0 & 7.0647e-7\\  \cline{2-7}
                                & (1, 1, 1) & 36.0 &0.000077778 & 0.0011957 & 3.0 & 1.5459e-9\\  \cline{2-7}
                                & (2, 1, 1) & 81.0 & 0.00011667 & 0.00011868 & 32.0 & 0.0002048 \\ \hline
                                \multicolumn{7}{|c|}{\textbf{Recovered Law:} {$\!%
                                \begin{aligned} &0.4287\sum_{i=1}^{3}x_i+0.3669\sum_{i=4}^{6}x_i-0.1216\sum_{i=7}^{9}x_i= C_1\\
                                & 0.3191\sum_{i=1}^{3}x_i-0.4385\sum_{i=4}^{6}x_i-0.1975\sum_{i=7}^{9}x_i= C_2\\
                                &0.2180\sum_{i=1}^{3}x_i-0.0794\sum_{i=4}^{6}x_i+0.5286\sum_{i=7}^{9}x_i= C_3
                                \end{aligned}
                                                        $} }\\ 
                                                        \multicolumn{7}{|c|}{}\\
                                \multicolumn{7}{|c|}{\textbf{Reduced Law:} {$\!%
                                                         \begin{aligned} &x_1+x_2+x_3 = C_1\\
                                                         &x_4+x_5+x_6=C_2\\
                                                          &x_7+x_8+x_9=C_3
                                                         \end{aligned}
                                                        $} } \\  \hhline{|=|=|=|=|=|=|=|} 
                         \multirow{8}*{1.2644e-04}
                                 & \textbf{(1, 0, 0)} & \textbf{9.0} & \textbf{0.075573}  & \textbf{0.40605} & \textbf{3.0 }&\textbf{0.00019149}\\  \cline{2-7}
                                & (2, 0, 0) & 54.0 &0.18512  & 1.4879 & 29.0 & 0.37502\\  \cline{2-7}
                                & (1, 0, 1) & 18.0 & .10688 & 0.32461 &11.0 & 0.26634\\  \cline{2-7}
                                 & (2, 0, 1) & 63.0 & 0.19995& 0.084683 & 37.0 & 0.30556\\  \cline{2-7}   
                                 & (1, 1, 0) & 27.0 & 0.1309  & 0.20166& 3.0 & 0.00017206\\  \cline{2-7}
                                & (2, 1, 0) & 72.0 &0.21375 & 0.13573 & 31.0 & 0.4558 \\  \cline{2-7}
                                & (1, 1, 1) & 36.0 &0.15115 &0.1464 & 11.0 & 0.13342\\  \cline{2-7}
                                & (2, 1, 1) & 81.0 & 0.22672& 0.13743 & 40.0 & 0.50621 \\ \hline
                                \multicolumn{7}{|c|}{\textbf{Recovered Law:} {$\!%
                                \begin{aligned} &-0.1896\sum_{i=1}^{3}x_i+0.5162\sum_{i=4}^{6}x_i+0.1756\sum_{i=7}^{9}x_i= C_1\\
                                & -0.5134\sum_{i=1}^{3}x_i-0.2316\sum_{i=4}^{6}x_i+0.1265\sum_{i=7}^{9}x_i= C_2\\
                                &0.1836\sum_{i=1}^{3}x_i-0.1146\sum_{i=4}^{6}x_i+0.5352\sum_{i=7}^{9}x_i= C_3
                                \end{aligned}
                                                        $} }\\
                                                        \multicolumn{7}{|c|}{}\\
                                \multicolumn{7}{|c|}{\textbf{Reduced Law:} {$\!%
                                                         \begin{aligned} &x_1+x_2+x_3 +10^{-5}\mathcal{O}(\textbf{x})= C_1\\
                                                         &x_4+x_5+x_6+10^{-5}\mathcal{O}(\textbf{x})=C_2\\
                                                          &x_7+x_8+x_9+10^{-5}\mathcal{O}(\textbf{x})=C_3
                                                         \end{aligned}
                                                        $} } \\ \hline
\end{tabular}
}%
    \caption{Table of values corresponding to $N=100$ for MAPK example (the right graphs of Figure \ref{fig:kholodenko_sing})}
    \label{tab:kholodenko_sing100}
\end{table}
\endgroup

\end{document}